\newtheorem{theorem}{Theorem}[section]
\newtheorem{proposition}[theorem]{Proposition}
\newtheorem{lemma}[theorem]{Lemma}
\newtheorem{corollary}[theorem]{Corollary}
\theoremstyle{definition}
\newtheorem{definition}[theorem]{Definition}
\newtheorem{example}[theorem]{Example}
\newtheorem{remark}[theorem]{Remark}
\numberwithin{equation}{section}
	\title[ $\ast$-paranormal absolutely norm attaining]{Representation and normality of $\ast$-paranormal absolutely norm attaining  operators}
\author{Neeru Bala}
\address{Statistics Mathematics Unit, ISI Banaglore, RV College Mysore Road, Bangalore - 600019 }
\email{neerusingh@gmail.com}
\subjclass[2010]{47A10, 47B20}
\keywords{$\ast$-paranormal operator, Weyl's spectrum, essential spectrum, invariant subspace, compact operator, $\mathcal{AN}$-operators, isometry.}
\begin{document}

	\begin{abstract}
	In this article, we give a representation of $\ast$-paranormal absolutely norm attaining  operator. Explicitly saying, every $\ast$-paranormal absolutely norm attaining ($\mathcal{AN}$ in short)  $T$ can be decomposed as $U\oplus D$, where $U$ is a direct sum of scalar multiple of unitary operators and $D$ is a $2\times 2$ upper diagonal operator matrix. By the representation it is clear that the class of $\ast$-paranormal $\mathcal{AN}$-operators is bigger than the class of normal $\mathcal{AN}$-operators but here we observe that a $\ast$-paranormal $\mathcal{AN}$-operator is normal if either it is invertible or dimension of its null space is same as dimension of null space of its adjoint.
	\end{abstract}
	
	\maketitle
	
	\section{Introduction}
	One of the most promising areas of research in operator theory deals with the study of non-normal operators. Hyponormal and paranormal operators are classes of non-normal operators which received considerable attention in the literature, \cite{ANDOh,ANDO,FURUTA,ISTRA} are a few references.
	
	Another interesting class of non-normal operators is the class of $\ast$-paranormal operators. We say a bounded linear operator $T$ on a Hilbert space $H$ is $\ast$-paranormal, if 
	\begin{equation*}
		\|{T^*}^2x\|\leq\|Tx\|^2\|x\|,\; \text{ for all}\; x\in H.
	\end{equation*}
	It is proved in \cite{ARORA} that every hyponormal operator is $\ast$-paranormal. K. Tanahashi and A. Uchiyama \cite{TANA} studied the spectral properties of $\ast$-paranormal operators. It is known that the inverse of an invertible paranormal operator is paranormal \cite{ISTRA} but in \cite{TANA} K. Tanahashi and A. Uchiyama gave an example of invertible  $\ast$-paranormal operator whose inverse is not $\ast$-paranormal. This shows that the definition of paranormal and $\ast$-paranormal operators looks similar but they behave quite differently.
	
	For non-normal operators, one of the questions which is addressed by many researchers is that what are the properties which distinguish it from normal operators or in other words, under what assumptions the non-normal operator becomes normal. For paranormal and hyponormal operators, a few properties are given in \cite{FURUTA,FURUTA HN,ISTRA,PUTNAM}. Here we explore this question for $\ast$-paranormal operators. In \cite{RASHID}, it is proved that a compact $\ast$-paranormal operator is normal. We want to replace the space of all compact operators by a bigger class of $\mathcal{AN}$-operators and see what extra assumptions we require to get normality. In \cite{HYPO}, we gave a representation of hyponormal $\mathcal{AN}$-operator. In this article, we prove a similar representation for $\ast$-paranormal operator. We illustrated with an example that the representation looks similar but it need not belong to the same class.

	Absolutely norm attaining operators is motivated by the study of norm attaining operators. Norm attaining operators on Banach spaces is an extensively  studied class of operators from many years, see \cite{ACOSTA,BISHOP,LEE} for more details. The class of absolutely norm attaining operators ($\mathcal{AN}$-operators, in short) is a subclass of norm attaining operators, which is first studied by X. Carvajal and W. Neves \cite{CARVAJAL}. We say that a bounded linear operator $T$ on a Hilbert space $H$ is absolutely norm attaining, if $T|_{M}$ is norm attaining for every closed subspace $M\subseteq H.$

	The article is divided into three sections. In section two, we recall a few definitions and results of $\ast$-paranormal operators and $\mathcal{AN}$-operators. In section three, we prove our main result of a representation of $\ast$-paranormal $\mathcal{AN}$-operators.
	\section{Preliminaries}
	Throughout this article, we work with infinite dimensional Hilbert spaces, which we denote by $H,H_1,H_2$ etc.
	The space of all bounded linear operators from a Hilbert space $H_1$ to $H_2$ is denoted by $\mathcal{B}(H_1,H_2)$. We write $\mathcal{B}(H):=\mathcal{B}(H,H)$. If $T\in\mathcal{B}(H_1,H_2)$, then the \textit{null space} and \textit{range space }of $T$ are denoted by $N(T)$ and $R(T)$, repectively.
	
	Let $T\in\mathcal{B}(H)$ and $M$ be a closed subspace of $H$. Then $M$ is said to be an \textit{invariant} subspace for $T$, if $TM\subseteq M$, and is said to be a \textit{reducing} subspace for $T$, if both $M$ and $M^{\perp}$ are invariant subspaces for $T$.
	
	
	An operator $T\in\mathcal{B}(H)$ is called \textit{normal}, if $TT^*=T^*T$ and \textit{self-adjoint}, if $T=T^*$. Further, $T$ is said to be \textit{positive}, if $T$ is self-adjoint and $\langle Tx,x\rangle\geq 0$ for all $x\in H$. For a positive operator $T$, there exist a unique positive operator $S$ satisfying $T=S^2$. The operator $S$ is called the \textit{square root} of $T$ and is denoted by $T^{1/2}$. In this article, we deal with the following classes of non-normal operators.
	
		For $T\in\mathcal{B}(H)$, the \textit{resolvent} of $T$ is the set $\rho(T):=\{\lambda\in\mathbb{C}:T-\lambda I \text{ is invertible in }\mathcal{B}(H)\}$, and the \textit{spectrum} of $T$ is $\sigma(T):=\mathbb{C}\setminus\rho(T)$. If $T=T^*$, then the \textit{discrete spectrum} $\sigma_d(T)$ of $T$ is determined by
	$$\sigma_d(T)=\{\lambda\in\sigma(T):\lambda\text{ is an isolated eigenvalue with finite multiplicity}\}.$$
	The set $\sigma(T)\setminus\sigma_d(T)$ is called the essential spectrum of $T$. If $T$ is not self-adjoint, then a more general definition of essential spectrum is given in \cite{MULLER}. For more details, we refer to \cite{REED, MULLER}.
	
	Throughout the article, we use the properties of the essential spectrum. We use the following description for the essential spectrum.
	\begin{theorem}\cite[Theorem 7.11, Page 236]{REED}
		If $T=T^*$, then $\lambda\in\sigma_{\text{ess}}(T)$ if and only if one of the following holds:
		\begin{enumerate}
			\item $\lambda$ is an eigenvalue of infinite multiplicity.
			\item  $\lambda$ is a limit point of the set of eigenvalues of $T$.
			\item $\lambda\in\sigma_c(T)=\{\delta\in\sigma(T):N(T-\delta I)=\{0\},\,\overline{R(T-\delta I)}=H,\, R(T-\delta I)\text{ is not closed}\}$.
		\end{enumerate}
	\end{theorem}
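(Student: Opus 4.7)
The strategy is to apply the spectral theorem for the self-adjoint operator $T$, which yields a projection-valued measure $E$ on the Borel sets of $\mathbb{R}$ with $T=\int\lambda\,dE(\lambda)$. The whole argument then reduces to analyzing the spectral projections $E((\lambda-\epsilon,\lambda+\epsilon))$ for $\epsilon>0$. The convenient reformulation I would record first is
\begin{equation*}
\lambda\in\sigma_{\text{ess}}(T)\iff \operatorname{rank} E((\lambda-\epsilon,\lambda+\epsilon))=\infty \text{ for every } \epsilon>0,
\end{equation*}
which follows from the definitions: $\lambda\in\sigma_d(T)$ means $\lambda$ is an isolated eigenvalue of finite multiplicity, and via the spectral calculus this is equivalent to $E((\lambda-\epsilon,\lambda+\epsilon))$ coinciding with the finite-dimensional atom $E(\{\lambda\})$ for some $\epsilon>0$.

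With that in hand, I would prove the forward direction by cases. If $E(\{\lambda\})$ has infinite rank, case (1) follows directly. Otherwise $E(\{\lambda\})$ has finite rank, and the reformulation forces $\operatorname{rank} E((\lambda-\epsilon,\lambda+\epsilon)\setminus\{\lambda\})=\infty$ for every $\epsilon>0$. If atoms of the spectral measure cluster at $\lambda$ from distinct nearby points, these are eigenvalues of $T$ converging to $\lambda$, giving case (2). Otherwise the measure is non-atomic in some punctured neighborhood of $\lambda$; restricting to the orthogonal complement of $R(E(\{\lambda\}))$, which absorbs any finite-dimensional eigenspace at $\lambda$ into $\sigma_d$, one is reduced to $E(\{\lambda\})=0$, and then $N(T-\lambda I)=\{0\}$ forces $\overline{R(T-\lambda I)}=H$; since $T-\lambda I$ is not invertible its range cannot be closed, placing $\lambda\in\sigma_c(T)$, which is case (3). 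The converse is routine case-by-case: (1) forbids finite multiplicity, (2) forbids $\lambda$ from being isolated in $\sigma(T)$, and (3) excludes $\lambda$ from being an eigenvalue, so $\lambda\notin\sigma_d(T)$ in every case, while $\lambda\in\sigma(T)$ is clear from the closedness of the spectrum.

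The delicate point is the bookkeeping between the atom at $\lambda$ and the surrounding spectral content: one must carefully justify that if $E(\{\lambda\})$ is finite-rank and the nearby measure is non-atomic yet infinite-dimensional, the atom at $\lambda$ does not obstruct passage to case (3) after the reduction to its orthogonal complement. A cleaner alternative I would keep in reserve is Weyl's criterion, which characterizes $\sigma_{\text{ess}}(T)$ as the set of $\lambda$ admitting an orthonormal sequence $\{x_n\}\subset H$ with $(T-\lambda I)x_n\to 0$; each of (1), (2), (3) then furnishes a transparent construction of such a Weyl sequence (eigenvectors for a single eigenvalue of infinite multiplicity, eigenvectors for a convergent sequence of distinct eigenvalues, or normalized images of spectral projections onto shrinking intervals around $\lambda$), bypassing the case analysis above.
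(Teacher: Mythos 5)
The paper does not actually prove this statement; it is imported with a citation to Reed--Simon, so there is no internal proof to compare against. Your overall strategy --- reducing everything to the spectral projections and the equivalence $\lambda\in\sigma_{\text{ess}}(T)\iff\operatorname{rank}E((\lambda-\epsilon,\lambda+\epsilon))=\infty$ for every $\epsilon>0$ --- is the standard route to the correct version of this theorem, and your converse direction is fine.

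The genuine gap sits exactly at the point you flag as delicate, and it cannot be repaired for the statement as transcribed, because that transcription is strictly stronger than Reed--Simon's Theorem VII.11 and is in fact false. Take $H=L^2[0,1]\oplus\mathbb{C}$ and $T=M_x\oplus\tfrac12$, where $M_x$ is multiplication by the independent variable. Then $\lambda=\tfrac12$ is a non-isolated eigenvalue of multiplicity one, hence lies in $\sigma(T)\setminus\sigma_d(T)=\sigma_{\text{ess}}(T)$; yet it has finite multiplicity (so (1) fails), the set of eigenvalues of $T$ is the singleton $\{\tfrac12\}$, of which $\tfrac12$ is not a limit point (so (2) fails), and $N(T-\tfrac12 I)\neq\{0\}$ (so (3), which demands trivial kernel, fails). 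Your proposed fix --- passing to the orthogonal complement of $R(E(\{\lambda\}))$ --- only shows that $\lambda$ lies in $\sigma_c$ of the \emph{compressed} operator; condition (3) is a statement about $T$ itself, for which the kernel condition is violated. Reed--Simon avoid this because their condition (a) is $\lambda\in\sigma_{\mathrm{cont}}(T):=\sigma\bigl(T|_{\mathcal{H}_{\mathrm{cont}}}\bigr)$, which does not exclude $\lambda$ from simultaneously being an eigenvalue, whereas the $\sigma_c$ defined here (with $N(T-\delta I)=\{0\}$) is a genuinely smaller set. A secondary point: Weyl's criterion does not let you bypass the case analysis --- building a singular sequence from each of (1)--(3) only yields the easy implication, while the forward direction (every essential spectral point falls into one of the three classes) is precisely where the case analysis, and the gap above, live. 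For the purposes of this paper the discrepancy is harmless, since the theorem is only ever applied to the positive operator $|T|$ of an $\mathcal{AN}$-operator, whose essential spectrum is a single point realized as an infinite-multiplicity eigenvalue or a limit of eigenvalues; but as a freestanding proof of the quoted statement, yours does not (and cannot) close.
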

	
	\begin{definition}
		Let $T\in\mathcal{B}(H)$. Then $T$ is said to be
		\begin{enumerate}
			\item  \textit{hyponormal}, if $\|T^*x\|\leq\|Tx\|$ for every $x\in H$.
			\item  \textit{paranormal}, if $\|Tx\|^2\leq\|T^2x\|\|x\|$ for every $x\in H$.
			\item  $\ast$-\textit{paranormal}, if $\|T^*x\|^2\leq\|T^2x\|\|x\|$ for every $x\in H$.
		\end{enumerate}
	\end{definition}
	It is easy to observe that every hyponormal operator is paranormal as well as $\ast$-paranormal. Note that $T$ is hyponormal if and only if $TT^*\leq T^*T.$ For more details about these classes of operators, we refer to \cite{ARORA,FURUTA,FURUTA HN,HAN,ISTRA,MARTIN,TANA}. The following result gives a characterization for $\ast$-paranormal operators.
	\begin{theorem}\cite{ARORA,HAN}
		Let $T\in\mathcal{B}(H)$. Then $T$ is $\ast$-paranormal if and only if
		\begin{align*}
			{T^*}^2T^2-2kTT^*+k^2I\geq 0\text{ for all }k>0.
		\end{align*}
	\end{theorem}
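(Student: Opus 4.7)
The plan is to prove the equivalence by computing the quadratic form associated to the operator inequality and recognizing that it reduces, for each fixed $x$, to a nonnegativity statement about a one-variable quadratic in $k$.

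First I would fix $x \in H$ and evaluate the quadratic form of $A_k := {T^*}^2T^2 - 2kTT^* + k^2I$ at $x$. Writing $\langle A_k x, x\rangle$ term by term gives
\begin{equation*}
\langle A_k x, x\rangle = \|T^2 x\|^2 - 2k\,\|T^* x\|^2 + k^2\,\|x\|^2.
\end{equation*}
Thus the operator inequality $A_k \geq 0$ for all $k>0$ translates into the scalar inequality $f_x(k) \geq 0$ for all $k>0$ and all $x \in H$, where $f_x(k) = \|x\|^2 k^2 - 2\|T^* x\|^2 k + \|T^2 x\|^2$.

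Next I would analyze $f_x$ as a quadratic in $k$. For $x = 0$ the statement is trivial, so assume $x \neq 0$; then $f_x$ is an upward-opening parabola with vertex at $k_0 = \|T^* x\|^2/\|x\|^2 \geq 0$. The key observation is that because the vertex lies in $[0,\infty)$, the condition $f_x(k) \geq 0$ for all $k > 0$ is equivalent (by continuity at $k=0$ and the fact that $f_x$ is increasing on $[k_0,\infty)$ and decreasing on $(-\infty,k_0]$) to the nonnegativity of the minimum value $f_x(k_0)$, which by the discriminant formula is
\begin{equation*}
f_x(k_0) = \|T^2 x\|^2 - \frac{\|T^* x\|^4}{\|x\|^2}.
\end{equation*}
So $A_k \geq 0$ for all $k > 0$ is equivalent to $\|T^* x\|^4 \leq \|T^2 x\|^2 \|x\|^2$ for every nonzero $x$, and taking square roots recovers exactly the defining inequality $\|T^* x\|^2 \leq \|T^2 x\|\,\|x\|$ of $\ast$-paranormality. (In the degenerate case $T^* x = 0$ both sides handle themselves trivially.)

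There is no real obstacle here: the proof is a direct computation using the standard trick of turning a one-parameter family of operator inequalities into a single Cauchy--Schwarz-type estimate via the discriminant of a real quadratic. The only minor subtlety worth flagging is that the hypothesis ranges over $k > 0$ rather than all real $k$, but this is harmless because the vertex $k_0$ is automatically nonnegative, so the minimum over $k > 0$ and the minimum over all $k \in \mathbb{R}$ coincide (or differ only in the trivial case $T^* x = 0$).
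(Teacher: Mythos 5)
Your proof is correct: the paper states this characterization as a cited preliminary (from Arora--Thukral and Han--Kim) without giving a proof, and your discriminant argument for the quadratic $f_x(k)=\|x\|^2k^2-2\|T^*x\|^2k+\|T^2x\|^2$, with the observation that the vertex $k_0=\|T^*x\|^2/\|x\|^2$ is nonnegative so the restriction to $k>0$ costs nothing, is exactly the standard argument in those references. No gaps; the degenerate cases $x=0$ and $T^*x=0$ are handled appropriately.
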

	\begin{lemma}\cite[Lemma 2]{TANA}
		If $T\in\mathcal{B}(H)$ is a $\ast$-paranormal operator and $M\subseteq H$ be a closed subspace invariant under $T$, then $T|_{M}$ is $\ast$-paranormal.
	\end{lemma}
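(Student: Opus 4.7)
The plan is to exploit the definition of $\ast$-paranormality pointwise on vectors in $M$ and to compare the adjoint and square of $S:=T|_M$ with the corresponding objects for $T$. The two quantities I need to control for $x\in M$ are $\|S^*x\|$ and $\|S^2x\|$, and each should be related to $\|T^*x\|$ and $\|T^2x\|$ respectively.

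First I would observe that because $M$ is invariant under $T$, for every $x\in M$ we have $Sx=Tx\in M$, so the iterate is well defined on $M$ and in fact $S^2x=T^2x$. In particular $\|S^2x\|=\|T^2x\|$. Next I would pin down $S^*$: since $S$ is the compression of $T$ to the (in general non-reducing) subspace $M$, one has $S^*=P_MT^*|_M$, where $P_M$ denotes the orthogonal projection onto $M$. Because $P_M$ is a contraction, this gives
\[
\|S^*x\|=\|P_MT^*x\|\leq\|T^*x\|\quad\text{for all }x\in M.
\]

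Now I would just invoke the hypothesis that $T$ is $\ast$-paranormal, applied to the vector $x\in M\subseteq H$, to get $\|T^*x\|^2\leq\|T^2x\|\|x\|$. Chaining the two inequalities,
\[
\|S^*x\|^2\leq\|T^*x\|^2\leq\|T^2x\|\|x\|=\|S^2x\|\|x\|,
\]
which is exactly the $\ast$-paranormal inequality for $S=T|_M$ on the Hilbert space $M$.

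I do not anticipate any real obstacle here; the only point that requires a moment of care is not confusing $S^*$ with $T^*|_M$. The key fact is the one-sided estimate $\|S^*x\|\leq\|T^*x\|$ coming from the projection onto $M$, which is exactly why the analogous statement for $\ast$-paranormal operators passes to invariant (rather than only reducing) subspaces. Note that the argument would fail if $\ast$-paranormality were instead defined with $\|S^*x\|$ on the larger side, which is one reason the dual inequality (the paranormal one) is the easier of the two to handle; here the projection goes in the favorable direction.
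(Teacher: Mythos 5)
Your proof is correct and is exactly the standard argument for this result (which the paper itself only cites from \cite[Lemma 2]{TANA} without reproducing a proof): the identities $S^2x=T^2x$ and $S^*=P_MT^*|_M$ for $S=T|_M$, together with the contractivity of $P_M$, immediately give $\|S^*x\|^2\leq\|T^*x\|^2\leq\|T^2x\|\|x\|=\|S^2x\|\|x\|$ for $x\in M$. No gaps.
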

	Let $T\in\mathcal{B}(H_1,H_2)$. Then $T$ is called an \textit{isometry}, if $\|Tx\|=\|x\|$ for every $x\in H_1$, and $T$ is called a \textit{partial isometry}, if $T$ is an isometry on $N(T)^{\perp}$. 
	For every operator $T$ there exist a unique partial isometry $V\in\mathcal{B}(H_1,H_2)$ such that $T=V|T|$, where $|T|=(T^*T)^{1/2}$ is called the \textit{modulus} of $T$. This factorization is called the \textit{polar decomposition} of $T$.
	
	Let $T\in\mathcal{B}(H_1,H_2)$. Then $T$ is called a \textit{norm attaining} operator, if $\|Tx\|=\|T\|\|x\|$ for some non-zero $x\in H_1$. We say $T$ is \textit{absolutely norm attaining}, if for every closed subspace $M\subseteq H_1$, $T|_{M}$ is norm attaining. We denote the set of all absolutely norm attaining operators in $\mathcal{B}(H_1,H_2)$ by $\mathcal{AN}(H_1,H_2)$. For more details about $\mathcal{AN}$-operators, we refer to \cite{HYPO,CARVAJAL,RAM,VENKU}.
	\begin{theorem}\cite[Corollaries 2.10, 2.11]{VENKU}
		Let $T\in\mathcal{B}(H_1,H_2)$. Then the following are equivalent.
		\begin{enumerate}
			\item $T\in\mathcal{AN}(H_1,H_2)$,
			\item $T^*T\in\mathcal{AN}(H_1)$,
			\item  $|T|\in\mathcal{AN}(H_1)$.
		\end{enumerate}
	\end{theorem}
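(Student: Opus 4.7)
The plan is to handle $(1)\Leftrightarrow(3)$ by a direct identity and then reduce $(2)\Leftrightarrow(3)$ to a statement about positive operators, which I would treat via the structure theory of positive $\mathcal{AN}$-operators.

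For $(1)\Leftrightarrow(3)$, I would start from
\[
\|Tx\|^2 = \langle T^*Tx, x\rangle = \||T|x\|^2 \qquad (x\in H_1),
\]
so $\|Tx\|=\||T|x\|$ on all of $H_1$. For any closed subspace $M\subseteq H_1$ this at once forces $\|T|_M\|=\||T||_M\|$ and makes $T|_M$ norm-attaining at a unit $x_0\in M$ if and only if $|T||_M$ is. The equivalence of (1) and (3) follows immediately from the definition of $\mathcal{AN}$.

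For $(2)\Leftrightarrow(3)$, setting $A:=|T|$ gives $T^*T=A^2$, so the claim reduces to the following statement about positive operators: $A\in\mathcal{AN}(H_1)$ iff $A^2\in\mathcal{AN}(H_1)$. My approach would be to invoke the structure theorem for positive $\mathcal{AN}$-operators on an infinite-dimensional Hilbert space, which says that a positive operator $P$ is $\mathcal{AN}$ if and only if
\[
P = \alpha I + K - F, \qquad \alpha\geq 0,\ K\geq 0\ \text{compact},\ F\geq 0\ \text{finite rank},\ KF=0,\ F\leq \alpha I.
\]
Given such a decomposition of $A$, the relations $KF=FK=0$ yield
\[
A^2 = \alpha^2 I + (2\alpha K + K^2) - (2\alpha F - F^2),
\]
and one checks that $2\alpha K + K^2\geq 0$ is compact, $2\alpha F - F^2\geq 0$ is of finite rank with $2\alpha F - F^2 \leq \alpha^2 I$ (since $\lambda(2\alpha-\lambda)\leq \alpha^2$ on $[0,\alpha]$), and the product $(2\alpha K+K^2)(2\alpha F - F^2)$ vanishes. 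Hence $A^2$ lies in the same structural class and is $\mathcal{AN}$. Conversely, if $A^2=\beta I+K'-F'$ is of this form, continuous functional calculus applied to the reducing decomposition of $H_1$ induced by $K'$ and $F'$ presents $A=(A^2)^{1/2}$ as $\sqrt{\beta}\,I+K''-F''$ in the same class, so $A\in\mathcal{AN}$.

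The main obstacle is the structure theorem itself, which is used as a black box above. If one had to reprove it from scratch, the delicate direction would be necessity: for any positive $A$ not of this form, one must construct a closed $M\subseteq H_1$ on which $A|_M$ fails to attain its norm. The standard construction takes $M$ to be spanned by an orthonormal sequence of approximate eigenvectors for an offending point of $\sigma_{\mathrm{ess}}(A)$, suitably perturbed so that no single unit vector in $M$ realises the supremum $\|A|_M\|$. Sufficiency is comparatively routine: one reduces the maximisation over a given $M$ to a finite-dimensional problem on the exceptional eigenvectors together with a ``scalar plus compact'' piece, and invokes compactness to produce the maximiser. Once the structure theorem is available, the algebraic passage between $A$ and $A^2$ displayed above is a short calculation.
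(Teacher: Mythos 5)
The paper does not prove this statement at all: it is quoted verbatim from \cite[Corollaries 2.10, 2.11]{VENKU} as a known tool, so there is no internal proof to compare against. Judged on its own, your argument is correct and is essentially the route taken in the cited source. The $(1)\Leftrightarrow(3)$ step via $\|Tx\|=\||T|x\|$ is exactly right, and your algebra for $(2)\Leftrightarrow(3)$ checks out: with $KF=FK=0$ one gets $A^2=\alpha^2I+(2\alpha K+K^2)-(2\alpha F-F^2)$, the cross terms in the product $(2\alpha K+K^2)(2\alpha F-F^2)$ all contain a factor $KF$ or $FK$ and vanish, and $0\leq 2\alpha F-F^2\leq\alpha^2 I$ follows from $\lambda(2\alpha-\lambda)\in[0,\alpha^2]$ on $\sigma(F)\subseteq[0,\alpha]$; the converse via the functional calculus $f(t)=\sqrt{\beta+t}-\sqrt{\beta}$ (with the trivial degenerate case $\beta=0$, where $F'=0$ and $A$ is compact) is also fine. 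One remark: you could avoid importing the additive structure theorem as a black box by instead using the spectral characterization already quoted in this paper (Theorem \ref{thm positive AN op}, from \cite{RAM}): for a positive $A$, the spectral mapping theorem gives $\sigma_{\text{ess}}(A^2)=\{\lambda^2:\lambda\in\sigma_{\text{ess}}(A)\}$, and since $t\mapsto t^2$ is an increasing bijection of $[0,\infty)$ onto itself, $\sigma_{\text{ess}}(A^2)$ is a singleton iff $\sigma_{\text{ess}}(A)$ is, and $[m(A^2),m_e(A^2))\cap\sigma(A^2)$ is finite iff $[m(A),m_e(A))\cap\sigma(A)$ is. That yields $(2)\Leftrightarrow(3)$ in two lines from material the paper already assumes, whereas your version additionally requires the decomposition $\alpha I+K-F$ with $KF=0$, $F\leq\alpha I$, which is a genuinely deeper input (its necessity direction is, as you note, the hard part).
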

For $T\in\mathcal{B}(H)$, the \textit{minimum modulus} of $T$, $m(T)=\inf\{\|Tx\|:x\in H,\|x\|=1\}$ and \textit{ essential minimum modulus} of $T$,
$m_e(T)=\{\lambda:\lambda\in\sigma_{\text{ess}}(|T|)\}$.
	\begin{theorem}\label{thm ess spec positive AN}\cite[Theorem 2.4]{RAM}\label{thm positive AN op}
		Let $T\in\mathcal{B}(H)$ be a positive operator. Then $T\in\mathcal{AN}(H)$ if and only if $\sigma_{\text{ess}}(T)$ is singleton and $[m(T),m_e(T))$ contains at most finitely many points of $\sigma(T)$.
	\end{theorem}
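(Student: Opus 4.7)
The plan is to prove both implications of the equivalence separately using the spectral theorem for the positive operator $T$.

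For the forward direction, assume $T \in \mathcal{AN}(H)$; each of the two spectral conditions is established by contraposition. If $\sigma_{\text{ess}}(T)$ contained two distinct points $\alpha < \beta$, then by Theorem 2.3 the spectral projections $E_T((\alpha - \delta, \alpha + \delta))$ and $E_T((\beta - \delta, \beta + \delta))$ both have infinite rank for small $\delta > 0$. Picking orthonormal approximate-eigenvector sequences $\{e_n\}$ and $\{f_n\}$ for $\alpha$ and $\beta$ respectively, I would form the closed subspace $M = \overline{\operatorname{span}}\{\cos\theta_n\, e_n + \sin\theta_n\, f_n\}$ with angles $\theta_n \nearrow \pi/2$, and verify that $\|T|_M\| = \beta$ is approached but never attained by any unit vector in $M$, violating the $\mathcal{AN}$ hypothesis. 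For the second condition, once $\sigma_{\text{ess}}(T) = \{c\}$ with $c = m_e(T)$ is in hand, the set $\sigma(T) \setminus \{c\}$ consists of isolated eigenvalues of finite multiplicity that can accumulate only at $c$. If infinitely many such eigenvalues lay in $[m(T), c)$, they would form a strictly increasing sequence $\lambda_n \uparrow c$ with unit eigenvectors $e_n$; on $M = \overline{\operatorname{span}}\{e_n\}$ every unit $x = \sum c_n e_n$ gives $\|Tx\|^2 = \sum |c_n|^2 \lambda_n^2 < c^2$, so $\|T|_M\| = c$ is never attained.

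For the converse, assume $\sigma_{\text{ess}}(T) = \{c\}$ and $\sigma(T) \cap [m(T), c)$ is finite. The spectral structure then gives $T = cI + F + K$, where $F$ is a finite-rank self-adjoint operator (capturing the finitely many eigenvalues $\lambda_i < c$) and $K$ is a positive compact operator (capturing eigenvalues $\mu_j > c$ that accumulate only at $c$); hence $T - cI$, and consequently $T^2 - c^2 I$, are both compact self-adjoint. For any closed subspace $M \subseteq H$, the positive operator $S := (T|_M)^*(T|_M) = P_M T^2|_M$ on $M$ is a compact self-adjoint perturbation of $c^2 I_M$, so $\sigma_{\text{ess}}(S) \subseteq \{c^2\}$. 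When $\|S\| > c^2$, the norm lies outside the essential spectrum of $S$ and is therefore an isolated eigenvalue of finite multiplicity, whose eigenvector provides an attainer for $\|T|_M\|$.

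The main obstacle is the boundary case $\|T|_M\| = c$, where one needs to exhibit a unit $x \in M$ with $\|Tx\| = c$. My plan is to take a norming sequence $x_n \in M$ of unit vectors with $\|Tx_n\| \to c$, pass to a weakly convergent subsequence $x_n \rightharpoonup x \in M$, and use compactness of $T - cI$ to compute $\|Tx_n\|^2 \to c^2(1 - \|x\|^2) + \|Tx\|^2$; this forces $\|Tx\| = c\|x\|$, so when $x \neq 0$ the unit vector $x/\|x\|$ is the desired attainer. The sub-case $x = 0$ must then be excluded by exploiting the finite dimensionality of $E_- = \bigoplus_i N(T - \lambda_i I)$ and the fact that the eigenvalues $\mu_j$ can accumulate at $c$ only from above; a careful analysis of how the mass of $x_n$ distributes across $E_-$, $E_c = N(T - cI)$, and $E_+ = \overline{\bigoplus_j N(T - \mu_j I)}$ shows the bad scenario can arise only when $M \cap E_c \neq \{0\}$, in which case any unit vector there is a trivial attainer.
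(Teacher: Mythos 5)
This statement is quoted by the paper from \cite[Theorem 2.4]{RAM} without proof, so there is no in-house argument to compare against; the proof in the literature runs through the structural characterization of positive $\mathcal{AN}$-operators as $\alpha I+K+F$ with $K$ positive compact and $F$ self-adjoint of finite rank (Pandey--Paulsen, Venku Naidu--Ramesh), whereas yours is a direct subspace-by-subspace argument. Your outline is essentially sound and in the converse direction you in fact recover that structure ($T-cI$ compact self-adjoint), but two steps need more than you have written. In the two-point essential spectrum argument, the identity $\|Tx\|^{2}=\sum_n|c_{n}|^{2}\|Tg_{n}\|^{2}$ requires the $e_{n}$ and $f_{n}$ to sit in ranges of spectral projections over pairwise disjoint Borel sets (or to be genuine orthonormal eigenvectors when $\alpha$ or $\beta$ has infinite multiplicity), and you must tune $\theta_{n}$ so that $\|Tg_{n}\|<\beta$ for every $n$ while $\|Tg_{n}\|\to\beta$: if the spectrum accumulates at $\beta$ only from above, a naive $\theta_{n}\nearrow\pi/2$ can give $\sup_{n}\|Tg_{n}\|>\beta$, attained at a finite index, which destroys the counterexample. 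An intermediate value argument in $\theta$ repairs this.

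The real soft spot is the boundary case $\|T|_{M}\|=c$. When $x_{n}\rightharpoonup 0$ the weak-limit computation yields no contradiction at all, since $\langle (T^{2}-c^{2}I)x_{n},x_{n}\rangle\to 0$ automatically for a compact perturbation; so the norming sequence cannot be made to produce an attainer and should be abandoned there. The clean argument is static: write $T^{2}-c^{2}I=-N+P$ with $N\geq 0$ of finite rank (coming from the finitely many eigenvalues below $c$) and $P\geq 0$. The hypothesis $\|T|_{M}\|=c$ says $\langle(-N+P)y,y\rangle\leq 0$ for all $y\in M$, so on the finite-codimension subspace $M\cap N(N)$ one gets $\langle Py,y\rangle=0$, hence $Py=Ny=0$ and $M\cap N(N)\subseteq N(T-cI)$; when $\dim M=\infty$ this subspace is nonzero and any unit vector in it attains the norm, and when $\dim M<\infty$ the norm is attained by compactness of the unit sphere. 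This is exactly the conclusion ($M\cap E_{c}\neq\{0\}$) you predicted, and it does use both ingredients you named, but it is reached by a codimension count rather than by tracking the mass of $x_{n}$. With these repairs your proof is correct.
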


	\section{Main results}

	In this section we study the structure of $\ast$-paranormal $\mathcal{AN}$-operators. We can compare these results with those of \cite{HYPO}.
	Let $T\in \mathcal B(H)$. Define
	\begin{align*}
		\mathcal{M}_{*}:=\{x\in H:\|T^*x\|=\|T\|\|x\|\},\,\mathcal{M}=\{x\in H:\|Tx\|=\|T\|\|x\|\}.
	\end{align*}
	\begin{lemma}
		Let $T\in \mathcal{B}(H)$ be a norm attaining $\ast$-paranormal. Then $\mathcal{M}_{*}\ne 0$ and $ \mathcal M_{*}\subseteq \mathcal{M}$. In fact, $\mathcal M_{*}=N(|T|-\|T\|I)\cap N(|T^*|-\|T\|I)$.
	\end{lemma}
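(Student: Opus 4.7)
The plan is to first identify $\mathcal{M}$ and $\mathcal{M}_{*}$ with the eigenspaces of $|T|$ and $|T^{*}|$ at the value $\|T\|$, then to produce a non-zero element of $\mathcal{M}_{*}$ using only the norm-attainment of $T$, and finally to invoke the $\ast$-paranormal inequality to obtain the reverse inclusion $\mathcal{M}_{*}\subseteq\mathcal{M}$; combining these gives the stated identity.

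First I would observe that $\|Tx\|^{2}=\langle|T|^{2}x,x\rangle$, and since $\|T\|^{2}I-|T|^{2}\geq 0$, the condition $\|Tx\|=\|T\|\,\|x\|$ amounts to $\langle(\|T\|^{2}I-|T|^{2})x,x\rangle=0$, equivalently $|T|x=\|T\|x$. Hence $\mathcal{M}=N(|T|-\|T\|I)$; applying the same reasoning to $T^{*}$ (and using $\|T^{*}\|=\|T\|$) yields $\mathcal{M}_{*}=N(|T^{*}|-\|T\|I)$. Therefore, once the inclusion $\mathcal{M}_{*}\subseteq\mathcal{M}$ has been established, the final identity $\mathcal{M}_{*}=N(|T|-\|T\|I)\cap N(|T^{*}|-\|T\|I)$ is automatic.

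To see $\mathcal{M}_{*}\neq\{0\}$, I would pick a unit vector $x\in\mathcal{M}$, which exists because $T$ is norm-attaining, and show $Tx\in\mathcal{M}_{*}$. The upper bound $\|T^{*}Tx\|\leq\|T^{*}T\|=\|T\|^{2}$ and the lower bound $\|T^{*}Tx\|\geq\langle T^{*}Tx,x\rangle=\|Tx\|^{2}=\|T\|^{2}$ pin down $\|T^{*}(Tx)\|=\|T\|\,\|Tx\|$, so $Tx\in\mathcal{M}_{*}$; since $\|Tx\|=\|T\|\neq 0$ (the case $T=0$ being trivial), this witness is non-zero. Notably, $\ast$-paranormality plays no role in this step.

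The main substantive step is $\mathcal{M}_{*}\subseteq\mathcal{M}$, where the $\ast$-paranormal hypothesis is essential and which I expect to be the only genuine obstacle. Given a unit vector $x\in\mathcal{M}_{*}$, the $\ast$-paranormal inequality yields $\|T\|^{2}=\|T^{*}x\|^{2}\leq\|T^{2}x\|\,\|x\|=\|T^{2}x\|$, while the routine chain $\|T^{2}x\|\leq\|T\|\,\|Tx\|\leq\|T\|^{2}$ provides the matching upper bound. Equality must therefore hold throughout, and in particular $\|Tx\|=\|T\|$, i.e., $x\in\mathcal{M}$, which completes the argument.
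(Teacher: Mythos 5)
Your proof is correct and follows essentially the same route as the paper: identify $\mathcal{M}$ and $\mathcal{M}_{*}$ with the eigenspaces $N(|T|-\|T\|I)$ and $N(|T^*|-\|T\|I)$, and then run the chain $\|T\|^2\|x\|^2=\|T^*x\|^2\leq\|T^2x\|\|x\|\leq\|T\|\|Tx\|\|x\|\leq\|T\|^2\|x\|^2$ to force $\mathcal{M}_{*}\subseteq\mathcal{M}$. Your additional observation that $Tx\in\mathcal{M}_{*}$ for a norm-attaining unit vector $x$ correctly supplies the non-triviality $\mathcal{M}_{*}\ne\{0\}$, a point the paper's write-up leaves implicit.
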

	Note that $\mathcal{M}_{*}=N(TT^*-\|T\|^2I)=N(|T^*|-\|T\|I)$ and $\mathcal{M}=N(T^*T-\|T\|^2I)=N(|T|-\|T\|I)$.
	For $x\in\mathcal{M}_{*}$, we have
	\begin{align*}
		\|T\|^2\|x\|^2=\|T^*x\|^2\leq\|T^2x\|\|x\|\leq\|T\|\|Tx\|\|x\|\leq\|T\|^2\|x\|^2.
	\end{align*}
	From this inequality, we get that $\|Tx\|=\|T\|\|x\|$ for every $x\in\mathcal{M}_{*}$. Consequently $\mathcal{M}_{*}\subseteq\mathcal{M}$ and $\mathcal{M}_{*}=\{x\in H: \|T^*x\|=\|T\|\|x\|=\|Tx\|\}=N(TT^*-\|T\|^2I)\cap N(T^*T-\|T\|^2I)=N(|T|-\|T\|I)\cap N(|T^*|-\|T\|I)$.
	\begin{remark}
		If $T\ne 0$ is norm attaining and $\ast$-paranormal, then $\mathcal{M}_{*}\subseteq N(|T|)^{\perp}\cap N(|T^*|)^{\perp}=N(T)^{\perp}\cap N(T^*)^{\perp}$.
	\end{remark}
	
	From the following result, we get that $\mathcal{M}_{*}$ is invariant under norm attaining $\ast$-paranormal operators and is reducing if it is finite-dimensional.
	\begin{lemma}\label{lemma invariant subspace}
		Let $T\in\mathcal{B}(H)$ be a norm attaining $\ast$-paranormal operator. Then $\mathcal{M}_{*}$ is an invariant subspace for $T$. Further, if $\mathcal{M}_{*}$ is finite-dimensional, then $\mathcal{M}_{*}$ is a reducing subspace for $T$.
	\end{lemma}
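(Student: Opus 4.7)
The plan is to read off invariance directly from the inclusion $\mathcal{M}_{*}\subseteq\mathcal{M}$ already established in the previous lemma, and then derive the reducing property by exploiting that, in the finite-dimensional case, the restriction $T|_{\mathcal{M}_{*}}$ is forced to be a scaled unitary rather than merely a scaled isometry.

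For invariance, I would simply fix $x\in\mathcal{M}_{*}$ and invoke the previous lemma, which yields $x\in\mathcal{M}=N(T^{*}T-\|T\|^{2}I)$. Then $T^{*}Tx=\|T\|^{2}x$ gives
\[
\|T^{*}(Tx)\|=\|T\|^{2}\|x\|=\|T\|\,\|Tx\|,
\]
and by the very definition of $\mathcal{M}_{*}$ this means $Tx\in\mathcal{M}_{*}$. Hence $T\mathcal{M}_{*}\subseteq\mathcal{M}_{*}$; no further use of the $\ast$-paranormal inequality is needed here beyond what went into the preceding lemma.

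For the reducing part, I would assume $T\ne 0$ (the zero case is vacuous) and set $A:=T|_{\mathcal{M}_{*}}$, which by invariance is a bounded operator on the finite-dimensional Hilbert space $\mathcal{M}_{*}$. Since every $y\in\mathcal{M}_{*}\subseteq\mathcal{M}$ satisfies $T^{*}Ty=\|T\|^{2}y$, taking the adjoint inside $\mathcal{M}_{*}$ yields $A^{*}A=\|T\|^{2}I_{\mathcal{M}_{*}}$. Finite-dimensionality then makes $A$ invertible, so $A^{*}=\|T\|^{2}A^{-1}$, and hence $AA^{*}=\|T\|^{2}I_{\mathcal{M}_{*}}$ as well. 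Writing $P$ for the orthogonal projection onto $\mathcal{M}_{*}$, one has $A^{*}y=PT^{*}y$ for $y\in\mathcal{M}_{*}$, so $\|PT^{*}y\|=\|T\|\,\|y\|$. Combined with $\|T^{*}y\|=\|T\|\,\|y\|$ (since $y\in\mathcal{M}_{*}$), the Pythagorean identity
\[
\|T^{*}y\|^{2}=\|PT^{*}y\|^{2}+\|(I-P)T^{*}y\|^{2}
\]
forces $(I-P)T^{*}y=0$, i.e., $T^{*}y\in\mathcal{M}_{*}$. Thus $\mathcal{M}_{*}$ is $T^{*}$-invariant, $\mathcal{M}_{*}^{\perp}$ is $T$-invariant, and $\mathcal{M}_{*}$ reduces $T$.

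The main obstacle is the implication $A^{*}A=\|T\|^{2}I\Rightarrow AA^{*}=\|T\|^{2}I$: this is precisely where finite-dimensionality is essential, and it will fail in general without it. This is consistent with the representation theorem announced in the abstract, where the non-unitary scaled-isometry pieces can appear only on infinite-dimensional summands.
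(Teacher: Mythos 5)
Your proposal is correct, and the first half coincides with the paper's argument: invariance is read off from $\mathcal{M}_{*}\subseteq\mathcal{M}$ exactly as you do, via $\|T^{*}(Tx)\|=\|T^{*}Tx\|=\|T\|^{2}\|x\|=\|T\|\,\|Tx\|$.

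For the reducing part you take a genuinely different route. The paper writes $T$ as a $2\times 2$ upper-triangular block matrix with respect to $\mathcal{M}_{*}\oplus\mathcal{M}_{*}^{\perp}$, invokes the characterization ${T^{*}}^{2}T^{2}-2\|T\|^{2}TT^{*}+\|T\|^{4}I\geq 0$, and computes the $(1,1)$ corner of this operator, which comes out to $-2\|T\|^{2}AA^{*}$; positivity then forces the off-diagonal block $A$ to vanish. You instead never touch the $\ast$-paranormal inequality again: from $\mathcal{M}_{*}\subseteq\mathcal{M}$ and invariance you get $A^{*}A=\|T\|^{2}I$ on $\mathcal{M}_{*}$, finite-dimensionality upgrades the scaled isometry to a scaled unitary so that $\|PT^{*}y\|=\|T\|\,\|y\|=\|T^{*}y\|$, and Pythagoras gives $T^{*}y\in\mathcal{M}_{*}$ directly (which is equivalent to $A=0$ in the block picture). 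Your argument is more elementary and isolates exactly where finite-dimensionality enters; the paper's block computation has the advantage that it is the template reused verbatim in Proposition \ref{prop norm as essential spec}, where $\mathcal{M}_{*}$ may be infinite-dimensional, the restriction is only a scaled isometry, and one can still extract the weaker conclusion $S_{0}^{*}A=0$ --- precisely the regime where, as you correctly note, your Pythagoras step would break down.
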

	\begin{proof}
		For $x\in\mathcal{M}_{*}$, we have
		\begin{align*}
			\|T^*Tx\|=\|T\|^2\|x\|=\|T\|\|Tx\|.
		\end{align*}
		Thus $Tx\in\mathcal{M}_{*}$ for every $x\in\mathcal{M}_{*}$. Hence $\mathcal{M}_{*}$ is an invariant subspace for $T$.
		
		Now, we assume that $\mathcal{M}_{*}$ is finite-dimensional. By the definition of $\mathcal{M}_{*}$, it is easy to observe that $\frac{T|_{\mathcal{M}_{*}}}{\|T\|}$ is an isometry. Since $\mathcal{M}_{*}$ is finite-dimensional, we have $\frac{T|_{\mathcal{M}_{*}}}{\|T\|}$ is a unitary operator. The operator $T$ can be represented as
		\[T=\begin{bmatrix}
			T|_{\mathcal{M}_{*}}&A\\
			0&B
		\end{bmatrix},\]
		where $A\in\mathcal{B}(\mathcal{M}_{*}^{\perp},\mathcal{M}_{*})$ and $B\in\mathcal{B}(\mathcal{M}_{*}^{\perp},\mathcal{M}_{*}^{\perp})$. Then
		\[TT^*=
		\begin{bmatrix}
			\|T\|^2I+AA^*&AB^*\\
			BA^*&BB^*
		\end{bmatrix}\]
		and \[{T^*}^2T^2=\begin{bmatrix}
			\|T\|^4I&\|T\|^2(T|_{\mathcal{M}_{*}})^*A+{(T|_{\mathcal{M}_{*}})^*}^2AB\\
			\|T\|^2A^*(T|_{\mathcal{M}_{*}})+B^*A^*(T|_{\mathcal{M}_{*}})^2&\|T\|^2A^*A+A^*(T|_{\mathcal{M}_{*}})^2AB\\
			&+B^*A^*(T|_{\mathcal{M}_{*}})A+B^*A^*AB+{B^*}^2B^2
		\end{bmatrix}.\]
		Using the above matrix representations of $TT^*$, ${T^*}^2T^2$ and the fact that ${T^*}^2T^2-2\|T\|^2TT^*+\|T\|^4I\geq 0$, we get the $(1,1)$ entry of ${T^*}^2T^2-2\|T\|^2TT^*+\|T\|^4I$ is
		\begin{align*}
			\|T\|^4I-2\|T\|^2\left(\|T\|^2I+AA^*\right)+\|T\|^4,
		\end{align*}
		which is positive.
		This implies $-AA^*\geq 0.$ As a consequence we get that $A=0$ and
		$T=\begin{bmatrix}
			T|_{\mathcal{M}_{*}}&0\\
			0&B
		\end{bmatrix}.$
		Hence $\mathcal{M}_{*}$ is a reducing subspace for $T$.
	\end{proof}
	\begin{corollary}\label{remark}
			Let $T\in\mathcal{AN}(H)$ be a $\ast$-paranormal operator. If $\mathcal{M}_{*}$ is finite-dimensional, then $\mathcal{M}_{*}=\mathcal{M}$.
		\end{corollary}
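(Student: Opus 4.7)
The plan is to exploit the reducing subspace structure furnished by Lemma~\ref{lemma invariant subspace}. Since $T\in\mathcal{AN}(H)$ is in particular norm attaining and $\mathcal{M}_{*}$ is finite-dimensional, that lemma writes $T=T_{1}\oplus B$ with respect to the decomposition $H=\mathcal{M}_{*}\oplus\mathcal{M}_{*}^{\perp}$, where $T_{1}:=T|_{\mathcal{M}_{*}}$ and $B\in\mathcal{B}(\mathcal{M}_{*}^{\perp})$. As observed inside the proof of that lemma, $T_{1}/\|T\|$ is an isometry on the finite-dimensional space $\mathcal{M}_{*}$, hence unitary, and therefore $T_{1}^{*}T_{1}=T_{1}T_{1}^{*}=\|T\|^{2}I_{\mathcal{M}_{*}}$.

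From this block-diagonal representation I would compute
\begin{equation*}
T^{*}T=\|T\|^{2}I_{\mathcal{M}_{*}}\oplus B^{*}B,\qquad TT^{*}=\|T\|^{2}I_{\mathcal{M}_{*}}\oplus BB^{*}.
\end{equation*}
The very definition $\mathcal{M}_{*}=N(TT^{*}-\|T\|^{2}I)$ together with the first display forces
\begin{equation*}
\mathcal{M}_{*}=\mathcal{M}_{*}\oplus N(BB^{*}-\|T\|^{2}I_{\mathcal{M}_{*}^{\perp}}),
\end{equation*}
so that $\|T\|^{2}$ is not an eigenvalue of $BB^{*}$. I would then invoke the standard transfer principle that for any bounded operator $B$ the nonzero eigenvalues of $B^{*}B$ and $BB^{*}$ coincide: if $B^{*}Bx=\lambda x$ with $\lambda\ne 0$ and $x\ne 0$, then $Bx\ne 0$ and $BB^{*}(Bx)=\lambda(Bx)$, and conversely. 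Applying this with $\lambda=\|T\|^{2}>0$ yields $N(B^{*}B-\|T\|^{2}I_{\mathcal{M}_{*}^{\perp}})=\{0\}$, and therefore
\begin{equation*}
\mathcal{M}=N(T^{*}T-\|T\|^{2}I)=\mathcal{M}_{*}\oplus\{0\}=\mathcal{M}_{*}.
\end{equation*}

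I do not expect any serious obstacle here. The $\ast$-paranormal and $\mathcal{AN}$ hypotheses are used only to guarantee norm attainment and to trigger Lemma~\ref{lemma invariant subspace}; beyond that, the argument is pure block-matrix bookkeeping combined with the eigenvalue transfer between $B^{*}B$ and $BB^{*}$. The one step that deserves careful verification is the identification of $N(TT^{*}-\|T\|^{2}I)$ with $\mathcal{M}_{*}\oplus N(BB^{*}-\|T\|^{2}I_{\mathcal{M}_{*}^{\perp}})$, which depends crucially on $\mathcal{M}_{*}$ being reducing (so that both $T^{*}T$ and $TT^{*}$ are genuinely block-diagonal) and on the $(1,1)$ block of $TT^{*}-\|T\|^{2}I$ being identically zero on $\mathcal{M}_{*}$.
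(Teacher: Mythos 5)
Your argument is correct, and after the common first step (invoking Lemma \ref{lemma invariant subspace} to make $\mathcal{M}_{*}$ reducing, so that $T^*T$ and $TT^*$ are genuinely block diagonal with $(1,1)$ blocks equal to $\|T\|^{2}I_{\mathcal{M}_{*}}$) it diverges from the paper's. The paper uses the $\mathcal{AN}$ hypothesis a second time: since $T|_{\mathcal{M}_{*}^{\perp}}$ is norm attaining, so is $B^{*}$, and because no nonzero vector of $\mathcal{M}_{*}^{\perp}$ can satisfy $\|T^{*}y\|=\|T\|\|y\|$ this forces the strict inequality $\|B\|<\|T\|$; the identity $\|T\|^{2}x_{2}=B^{*}Bx_{2}$ for the $\mathcal{M}_{*}^{\perp}$-component of an element of $\mathcal{M}$ then yields $x_{2}=0$. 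You instead read off from $\mathcal{M}_{*}=N(TT^{*}-\|T\|^{2}I)$ that $\|T\|^{2}\notin\sigma_{p}(BB^{*})$ and transfer this to $B^{*}B$ via the standard coincidence of nonzero eigenvalues of $BB^{*}$ and $B^{*}B$, concluding $N(B^{*}B-\|T\|^{2}I_{\mathcal{M}_{*}^{\perp}})=\{0\}$ and hence $\mathcal{M}=N(T^{*}T-\|T\|^{2}I)=\mathcal{M}_{*}$. Your route is more elementary and slightly more general: it never uses norm attainment of $T$ on $\mathcal{M}_{*}^{\perp}$, only plain norm attainment of $T$ itself (needed to trigger the lemma), so it in fact proves the conclusion for every norm-attaining $\ast$-paranormal operator with $\dim\mathcal{M}_{*}<\infty$. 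What the paper's detour buys is the quantitative statement $\|B\|<\|T\|$, a genuine norm gap on $\mathcal{M}_{*}^{\perp}$ in the spirit of the spectral analysis of Section 3, whereas your argument only excludes $\|T\|^{2}$ as an eigenvalue of $B^{*}B$; both suffice for the corollary.
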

		\begin{proof}
			Let $\mathcal{M}_{*}$ be finite dimensional. Then applying Lemma \ref{lemma invariant subspace} we get that $\mathcal{M}_{*}$ is a reducing subspace for $T$ and hence we have
			\begin{align*}
				T=\begin{bmatrix}
					T|_{\mathcal{M}_{*}}&0\\
					0&B
				\end{bmatrix},\,T^*=\begin{bmatrix}
					T^*|_{\mathcal{M}_{*}}&0\\
					0&B^*
				\end{bmatrix}.
			\end{align*}
			We claim that $\|B\|<\|T\|$. Since $T\in\mathcal{AN}(H)$, we have $T|_{\mathcal{M}_{*}^{\perp}}$ is norm attaining. Thus $B^*=T^*|_{\mathcal{M}_{*}^{\perp}}=\left(T|_{\mathcal{M}_{*}^{\perp}}\right)^*$ is norm attaining by \cite{CARVAJAL} and $\|B\|^2\in\sigma_p(BB^*)$. By definition of $\mathcal{M}_{*}$, we have that $\|B^*\|<\|T^*\|$ and consequently $\|B\|<\|T\|$.
			
			We know that $\mathcal{M}_{*}\subseteq\mathcal{M}$. Let $x\in\mathcal{M}$. Then $x=x_1+x_2$ for $x_1\in\mathcal{M}_{*}$ and $x_2\in\mathcal{M}_{*}^{\perp}\cap \mathcal M$. Then
			\begin{equation*}
				\|T\|^2x=\|T\|^2(x_1+x_2)=T^*Tx=T^*T(x_1+x_2)=\|T\|^2x_1+B^*Bx_2.
			\end{equation*}
			From the above equality, we have $\|T\|^2x_2=B^*Bx_2$, but $\|B\|<\|T\|$, which implies $x_2=0$ and $x=x_1\in\mathcal{M}_{*}$. Hence $\mathcal{M}_{*}=\mathcal{M}$.
	\end{proof}
In the followings results, we discuss a representation of absolutely norm attaining $\ast$-paranormal operators. This representation depends on the spectrum of $|T|$, which is described in Theorem \ref{thm star paranormal}. Hence based on this, we discuss several cases. 

Recall that, if $T\in\mathcal{AN}(H)$, then $|T|\in\mathcal{AN}(H)$. Then 
$$\sigma(|T|)\subseteq[m(T),m_e(T)]\cup(m_e(T),\|T\|],$$
by Theorem \ref{thm ess spec positive AN}. Also, note that $[m(T),m_e(T))\cap\sigma(|T|)$ is a finite set. Here $m_e(T)$ can be the only spectral point in $\sigma_{\text{ess}}(|T|)$. There are only two possibilities for $m_e(T)$, namely it can be eigenvalue with infinite multiplicity or a limit point of the spectral values in $(m_e(T),\|T\|]$, or both. The following result about positive $\mathcal{AN}$ operators is frequently used in the subsequent part of the article.
\begin{theorem}\cite[Theorem 3.8]{PANDEY}
	Let $T\in\mathcal{AN}(H)$ be a positive operator. Then 
	$$T=\underset{\alpha\in\Lambda}{\sum}\beta_{\alpha}v_{\alpha}\otimes v_{\alpha},$$
	where $\{v_{\alpha}:\alpha\in\Lambda\}$ is an orthonormal basis for $H$ consisting entirely of eigenvectors of $T$ and for every $\alpha\in\Lambda$, $Tv_{\alpha}=\beta_{\alpha}v_{\alpha}$ with $\beta_{\alpha}\geq 0$ such that:
	\begin{enumerate}
		\item  for every non-empty set $\Gamma\subseteq\Lambda$, we have $\sup\{\beta_{\alpha}:\alpha\in\Gamma\}=\max\{\beta_{\alpha}:\alpha\in\Gamma\}$,
		\item the spectrum $\sigma(T)=\overline{\{\beta_{\alpha} : \alpha\in\Lambda\}}$ has at most one limit point. Moreover, this unique limit point (if exists) can only be the limit of an increasing sequence in the spectrum,
		\item the set $\{\beta_{\alpha}: \alpha\in\Lambda\}$ of eigenvalues of $T$, without counting multiplicities, is countable and has at most one eigenvalue with infinite multiplicity,
		\item  if $\sigma(T)$ has both a limit point and an eigenvalue with infinite multiplicity, then they must be the same.
	\end{enumerate}
	Here $v_{\alpha}\otimes v_{\alpha}(x)=\langle x,v_{\alpha}\rangle v_{\alpha}$ for $x\in H$ and $\alpha\in \Lambda$.
\end{theorem}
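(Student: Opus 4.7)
The plan is to combine Theorem~\ref{thm ess spec positive AN} with the Reed--Simon classification of the essential spectrum for self-adjoint operators (quoted earlier from \cite{REED}) to produce an orthonormal basis of eigenvectors for $T$, and then read off the structural properties (1)--(4) from the resulting spectral picture.

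First I would decompose $H = H_{pp} \oplus H_c$, where $H_{pp}$ is the closed linear span of all eigenvectors of $T$. Since $T$ is self-adjoint, $H_{pp}$ is reducing; hence $H_c$ is reducing as well and $T|_{H_c}$ is a positive operator on $H_c$. Moreover $T|_{H_c} \in \mathcal{AN}(H_c)$, because any closed subspace of $H_c$ is already a closed subspace of $H$ and $T$ is absolutely norm attaining on all of $H$. The goal is then to show $H_c = 0$, which is the crux of the argument and the main obstacle.

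Assuming $H_c \neq 0$ for contradiction, $T|_{H_c}$ has no eigenvalues by construction, so the Reed--Simon trichotomy forces every point of $\sigma_{\text{ess}}(T|_{H_c})$ into the continuous spectrum, giving $\sigma(T|_{H_c}) = \sigma_{\text{ess}}(T|_{H_c}) = \sigma_c(T|_{H_c})$. Applying Theorem~\ref{thm ess spec positive AN} to the positive $\mathcal{AN}$-operator $T|_{H_c}$ yields $\sigma_{\text{ess}}(T|_{H_c}) = \{\mu\}$ for some $\mu \geq 0$, and hence $\sigma(T|_{H_c}) = \{\mu\}$. The spectral theorem for bounded self-adjoint operators then forces $T|_{H_c} = \mu I_{H_c}$, making every nonzero vector of $H_c$ an eigenvector of $T$ with eigenvalue $\mu$, contradicting $H_c \perp H_{pp}$. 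Therefore $H = H_{pp}$, and selecting an orthonormal basis in each eigenspace produces the required family $\{v_\alpha\}_{\alpha \in \Lambda}$ with $T v_\alpha = \beta_\alpha v_\alpha$ and $T = \sum_{\alpha \in \Lambda} \beta_\alpha\, v_\alpha \otimes v_\alpha$.

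The four structural conditions would then follow directly from the spectral description supplied by Theorem~\ref{thm ess spec positive AN}: $\sigma_{\text{ess}}(T) = \{m_e(T)\}$ is a singleton, $\sigma(T) \cap [m(T), m_e(T))$ is finite, and the isolated eigenvalues in $(m_e(T), \|T\|]$ form a monotone sequence with $m_e(T)$ as the sole possible accumulation point, each of finite multiplicity. Thus the only candidate limit point of $\{\beta_\alpha\}$ and the only eigenvalue that can carry infinite multiplicity both coincide with $m_e(T)$, which yields (2)--(4); the set of distinct eigenvalues is countable because each nonessential eigenvalue has finite multiplicity. For (1), the supremum of any $\{\beta_\alpha : \alpha \in \Gamma\}$ is either attained at its largest value strictly above $m_e(T)$ (the monotone sequence in $(m_e(T), \|T\|]$ has a largest member in any subset) or else lies in the finite set $\sigma(T) \cap [m(T), m_e(T)]$, where a maximum trivially exists.
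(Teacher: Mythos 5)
The paper offers no internal proof of this statement --- it is imported verbatim as a citation of \cite[Theorem 3.8]{PANDEY} --- so your self-contained derivation is by construction a different route, and it is a sound one. The key step, decomposing $H=H_{pp}\oplus H_c$ and killing $H_c$ by observing that $T|_{H_c}$ is a positive $\mathcal{AN}$-operator with no eigenvalues, hence (via the Reed--Simon trichotomy and Theorem \ref{thm ess spec positive AN}) has singleton spectrum and is therefore a scalar, is a clean way to force a complete eigenbasis; it correctly exploits that $H_c$ reduces $T$ and that restrictions of $\mathcal{AN}$-operators to reducing subspaces remain $\mathcal{AN}$. Two small points deserve a sentence in a written-up version: if $H_c$ is nonzero and finite-dimensional, Theorem \ref{thm ess spec positive AN} does not apply (the essential spectrum is empty), but then $T|_{H_c}$ has an eigenvalue trivially; and countability in (3) follows not from finite multiplicity but from the fact that every eigenvalue other than $m_e(T)$ is an isolated point of $\sigma(T)$, and a set of isolated points in $\mathbb{R}$ is countable. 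Your verification of (1) via the decreasing enumeration of $\sigma(T)\cap(m_e(T),\|T\|]$ and the finiteness of $\sigma(T)\cap[m(T),m_e(T))$ is correct.

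The one substantive discrepancy is item (2). As transcribed in this paper the limit point is said to be ``the limit of an \emph{increasing} sequence in the spectrum,'' but that is inconsistent both with item (1) (a strictly increasing sequence of eigenvalues would be a subset with no maximum) and with your own --- correct --- spectral picture, in which the only possible accumulation happens from above, via the decreasing sequence in $(m_e(T),\|T\|]$ (compare $T=\mathrm{diag}(1,\tfrac12,\tfrac13,\dots)$, which is $\mathcal{AN}$ and whose limit point $0$ is approached only from above). What you actually prove is the ``decreasing'' version, which is what \cite{PANDEY} asserts and what the paper itself uses in the discussion preceding the theorem; the word ``increasing'' in the quoted statement is a transcription slip. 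You should say explicitly that your argument establishes the decreasing version rather than asserting without comment that it ``yields (2)'' as literally stated.
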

	\begin{proposition}\label{prop norm as essential spec}
		Let $T\in\mathcal{AN}(H)$ be a $\ast$-paranormal operator with $\sigma_{\text{ess}}(|T|)=\{\|T\|\}$. Then
		\[T=\begin{bmatrix}
			\|T\|S_0&A\\
			0&B
		\end{bmatrix},\]
		and $S_0^*A=0$, where $S_0\in\mathcal{B}(\mathcal{M}_{*})$ is an isometry and $A\in\mathcal{B}(\mathcal{M}_{*}^{\perp},\mathcal{M}_{*})$, $B\in\mathcal{B}(\mathcal{M}_{*}^{\perp},\mathcal{M}_{*}^{\perp}).$
	\end{proposition}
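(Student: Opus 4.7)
The plan is to put $T$ in upper-triangular block form using the invariance of $\mathcal{M}_{*}$ established in Lemma \ref{lemma invariant subspace}, identify the $(1,1)$-block as $\|T\|$ times an isometry, and then deduce $S_0^*A=0$ by evaluating the norm-preservation relation $\|T^*y\|=\|T\|\|y\|$ on the range of that isometry. Since $T\in\mathcal{AN}(H)$ is in particular norm attaining, the opening lemma of this section gives $\mathcal{M}_{*}\ne\{0\}$, and Lemma \ref{lemma invariant subspace} shows that $\mathcal{M}_{*}$ is $T$-invariant. Relative to $H=\mathcal{M}_{*}\oplus\mathcal{M}_{*}^{\perp}$ this yields
\[
T=\begin{bmatrix} T_1 & A \\ 0 & B \end{bmatrix},
\]
with $T_1:=T|_{\mathcal{M}_{*}}$, $A\in\mathcal{B}(\mathcal{M}_{*}^{\perp},\mathcal{M}_{*})$, and $B\in\mathcal{B}(\mathcal{M}_{*}^{\perp})$. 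For $y\in\mathcal{M}_{*}$, the inclusion $\mathcal{M}_{*}\subseteq\mathcal{M}$ gives $\|T_1y\|=\|Ty\|=\|T\|\|y\|$, so $S_0:=T_1/\|T\|$ is an isometry on $\mathcal{M}_{*}$, which is the claimed $(1,1)$-block.

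For the key identity I expand $\|T^*y\|$ for $y\in\mathcal{M}_{*}$ using the matrix form of $T^*$: writing $T^*y=(\|T\|S_0^*y,\,A^*y)\in\mathcal{M}_{*}\oplus\mathcal{M}_{*}^{\perp}$ gives $\|T^*y\|^2=\|T\|^2\|S_0^*y\|^2+\|A^*y\|^2$, while the definition of $\mathcal{M}_{*}$ gives $\|T^*y\|=\|T\|\|y\|$. Combining these,
\[
\|A^*y\|^2=\|T\|^2\bigl(\|y\|^2-\|S_0^*y\|^2\bigr)\qquad\text{for every }y\in\mathcal{M}_{*}.
\]
Specialising to $y=S_0z$ for arbitrary $z\in\mathcal{M}_{*}$ makes the right-hand side vanish, since $\|S_0z\|=\|z\|$ and $S_0^*S_0z=z$. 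Hence $A^*S_0z=0$ for every such $z$, i.e.\ $A^*S_0=0$, equivalently $S_0^*A=0$.

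The step where care is needed is avoiding a direct transcription of the finite-dimensional argument of Lemma \ref{lemma invariant subspace}. The hypothesis $\sigma_{\text{ess}}(|T|)=\{\|T\|\}$, combined with the structure theorem for positive $\mathcal{AN}$-operators, forces $\|T\|$ to be an eigenvalue of $|T|$ of infinite multiplicity, so $\mathcal{M}_{*}$ can be infinite-dimensional and $S_0$ is a genuine isometry rather than a unitary. In particular, positivity of the $(1,1)$-entry of ${T^*}^2T^2-2\|T\|^2TT^*+\|T\|^4I$ only yields $\|T\|^2(I-S_0S_0^*)\geq AA^*$, which is strictly weaker than $A=0$. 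The subtler identity $S_0^*A=0$ is precisely what emerges when the norm-preservation equation for $T^*$ is restricted to the range of the isometry $S_0$, rather than imposed on all of $\mathcal{M}_{*}$.
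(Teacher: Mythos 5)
Your proof is correct, and the key step is obtained by a genuinely different (and cleaner) route than the paper's. The paper identifies $S_0$ via the polar decomposition $T=V|T|$ (you get the same isometry more directly from $\mathcal{M}_*\subseteq\mathcal{M}$), and then derives $S_0^*A=0$ from the operator inequality ${T^*}^2T^2-2\|T\|^2TT^*+\|T\|^4I\geq 0$: after computing the full block matrices of $TT^*$ and ${T^*}^2T^2$, the $(1,1)$ entry gives $\|T\|^2\bigl(I-S_0S_0^*\bigr)\geq AA^*$, and conjugating by $S_0$ (using $S_0^*S_0=I$) forces $S_0^*AA^*S_0\leq 0$, hence $S_0^*A=0$. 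So your closing remark slightly undersells the inequality route: it does yield $S_0^*A=0$, just not $A=0$. Your argument replaces this with the exact scalar identity $\|T^*y\|^2=\|T\|^2\|S_0^*y\|^2+\|A^*y\|^2=\|T\|^2\|y\|^2$ on $\mathcal{M}_*$, evaluated at $y=S_0z$ (which lies in $\mathcal{M}_*$ because $S_0$ is the $(1,1)$-block of the invariant decomposition). This avoids the computation of ${T^*}^2T^2$ and the operator-inequality characterization of $\ast$-paranormality entirely, pushing all use of $\ast$-paranormality into the two preparatory lemmas ($\mathcal{M}_*\subseteq\mathcal{M}$ and $T$-invariance of $\mathcal{M}_*$); as a bonus it gives the sharper equality $AA^*=\|T\|^2(I-S_0S_0^*)$ on $\mathcal{M}_*$ rather than just the inequality. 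Both proofs, like the paper's, never actually invoke the hypothesis $\sigma_{\text{ess}}(|T|)=\{\|T\|\}$, which only serves to place the proposition within the case analysis of Lemmas \ref{lemma2} and \ref{lemma3}.
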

	\begin{proof}
		Let $T=V|T|$ be the polar decomposition of $T$. For $x\in\mathcal{M}_{*}$, we have
		$$Tx=V|T|x=\|T\|Vx.$$
		This implies $T|_{\mathcal{M}_{*}}=\|T\|V|_{\mathcal{M}_{*}}$, but $V|_{\mathcal{M}_*}$ is an isomtery, thus $T|_{\mathcal{M}_{*}}=\|T\|S_0$, where $S_0=V|_{\mathcal{M}_{*}}$.
		
		Since $\mathcal{M}_{*}$ is invariant under $T$, we have
		\[T=\begin{bmatrix}
			\|T\|S_0&A\\
			0&B
		\end{bmatrix},\]
		where $A=P_1TP_2|_{\mathcal M_{*}^{\bot}}$ and $B=P_2TP_2|_{\mathcal M_{*}^{\bot}}$, $P_1$ is orthogonal projection onto $\mathcal{M}_{*}$ and $P_2=I-P_1$.
		Then \[TT^*=\begin{bmatrix}
			\|T\|^2S_0S_0^*+AA^*&AB^*\\
			BA^*&BB^*
		\end{bmatrix},\]
		and
		\[{T^*}^2T^2=\begin{bmatrix}
			\|T\|^4I&\|T\|S_0^*A+\|T\|^3{S_0^*}^2AB\\
			\|T\|^3A^*S_0+\|T\|^2B^*A^*S_0^2&A^*A+\|T\|A^*S_0^*AB+\|T\|B^*A^*S_0A\\
			&+B^*A^*AB+{B^*}^2B^2
		\end{bmatrix}.\]
		The $(1,1)$ entry of the $2\times 2$ operator  matrix of  ${T^*}^2T^2-2\|T\|^2TT^*+\|T\|^4I$ is
		$$2\|T\|^4-2\|T\|^4S_0S_0^*-2\|T\|^2AA^*.$$
		Since $T$ is $\ast$-paranormal, we have ${T^*}^2T^2-2\|T\|^2TT^*+\|T\|^4I\geq 0$. As a result
		$$2\|T\|^4-2\|T\|^4S_0S_0^*-2\|T\|^2AA^*\geq 0.$$
		We know that $S_0$ is isometry, thus the above equation is equivalent to
		\begin{align*}
			\|T\|^4S_0^*S_0\geq&\|T\|^4S_0^*S_0S_0^*S_0+\|T\|^2S_0^*AA^*S_0\\
			=&\|T\|^4+\|T\|^2S_0^*AA^*S_0.
		\end{align*}
		Thus $S_0^*AA^*S_0\leq 0$ and consequently $S_0^*A=0$.
	\end{proof}
	\begin{lemma}\label{lemma2}
		Let $T\in\mathcal{AN}(H)$ be a $\ast$-paranormal operator and $[m_e(T),\|T\|]\cap 
		\sigma(|T|)$ is a finite set of cardinality $n_1\in\mathbb{N}$. Then $T$ has the following representation:
		\[T=\begin{bmatrix}
			\underset{i=1}{\overset{n_1}{\oplus}}\lambda_i S_i&0&0\\
			0&m_e(T)S&A\\
			0&0&B
		\end{bmatrix},\]
		with respect to the decomposition $H=H_1\oplus H_2\oplus H_3$, where
		\begin{enumerate}
			\item $\lambda_i$ are positive real number for every $1\leq i\leq n_1$.
			\item $H_1,\,H_3$ are finite-dimensional subspaces of $H$.
			\item $S_i\in\mathcal{B}(N(|T^*|-\lambda_iI))$ are unitary operators, for $1\leq i\leq n_1$.
			\item $S\in\mathcal{B}(N(|T^*|-m_e(T)I))$ is an isometry.
			\item $S^*A=0$.
		\end{enumerate}
	\end{lemma}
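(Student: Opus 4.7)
My plan is a two-stage peeling: first, I iteratively strip off the eigenspaces of $|T^*|$ at each isolated eigenvalue of $|T|$ above $m_e(T)$; then I apply Proposition~\ref{prop norm as essential spec} to the residual operator, whose norm is exactly $m_e(T)$. The preparation rests on the structure theorem of \cite{PANDEY} for positive $\mathcal{AN}$-operators: applied to $|T|$, together with the finiteness hypothesis on $[m_e(T),\|T\|]\cap\sigma(|T|)$, it gives that the spectrum of $|T|$ above $m_e(T)$ consists of isolated eigenvalues $\lambda_1>\lambda_2>\cdots>\lambda_{n_1}$ of finite multiplicity (with $\lambda_1=\|T\|$), while $\sigma(|T|)\cap[m(T),m_e(T))$ is also a finite set of finite-multiplicity eigenvalues.

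For the iteration, at stage $i$ I consider the operator obtained by restricting $T$ to the orthogonal complement of the subspaces previously stripped off. This restriction has norm $\lambda_i$, is $\ast$-paranormal (by the invariant-subspace lemma of \cite{TANA}), and remains in $\mathcal{AN}$ since restrictions to reducing subspaces inherit the $\mathcal{AN}$ property. Its analogue of $\mathcal{M}_*$, namely $N(|T^*|-\lambda_iI)$, sits inside the finite-dimensional $|T|$-eigenspace $N(|T|-\lambda_iI)$ and hence is finite-dimensional. Lemma~\ref{lemma invariant subspace} then shows it is a reducing subspace, and Corollary~\ref{remark} forces equality with $N(|T|-\lambda_iI)$. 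Writing $T=V|T|$, the restriction $T|_{N(|T^*|-\lambda_iI)}$ equals $\lambda_i V|_{N(|T^*|-\lambda_iI)}$, and since $V$ restricted to this finite-dimensional space is an isometry mapping into itself, it is automatically unitary; I take $S_i$ to be this restriction.

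After $n_1$ iterations, $T=\bigl(\bigoplus_{i=1}^{n_1}\lambda_iS_i\bigr)\oplus T'$ on $H_1\oplus H_1^{\perp}$, where $T'\in\mathcal{AN}(H_1^{\perp})$ is $\ast$-paranormal, $\|T'\|=m_e(T)$, and $\sigma_{\text{ess}}(|T'|)=\{m_e(T)\}=\{\|T'\|\}$. Proposition~\ref{prop norm as essential spec} now applies to $T'$ and yields
\[T'=\begin{bmatrix}m_e(T)S&A\\0&B\end{bmatrix}\]
on $\mathcal{M}_*^{T'}\oplus(H_1^{\perp}\ominus\mathcal{M}_*^{T'})$, with $S$ an isometry and $S^*A=0$. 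Since eigenvectors of the self-adjoint operator $|T^*|$ at distinct eigenvalues are orthogonal, $\mathcal{M}_*^{T'}$ coincides with the prescribed $H_2=N(|T^*|-m_e(T)I)$, and I set $H_3=H_1^{\perp}\ominus H_2$.

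The main anticipated obstacle is the finite-dimensionality of $H_3$. My plan is to decompose $H_3=\bigl((\mathcal{M}^{T'})^{\perp}\cap H_1^{\perp}\bigr)\oplus\bigl(\mathcal{M}^{T'}\ominus\mathcal{M}_*^{T'}\bigr)$, where $\mathcal{M}^{T'}=N(|T'|-m_e(T)I)$. The first summand is finite-dimensional because on it $|T'|$ has spectrum contained in $\sigma(|T|)\cap[m(T),m_e(T))$, a finite set of eigenvalues of finite multiplicity. The delicate step is to show the second summand is trivial; I expect this to follow by combining the polar-decomposition identity $\dim N(|T'^{*}|-m_e(T)I)=\dim N(|T'|-m_e(T)I)$ (since $T'T'^{*}$ and $T'^{*}T'$ share nonzero eigenvalues with multiplicity) with the $\ast$-paranormal inequality ${T'^{*}}^{2}T'^{2}\geq 2m_e(T)^{2}T'T'^{*}-m_e(T)^{4}I$ tested on vectors in the $m_e(T)$-eigenspace of $|T'|$, forcing $\mathcal{M}^{T'}=\mathcal{M}_*^{T'}$ and completing the proof.
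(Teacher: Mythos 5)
Your first stage --- peeling off $N(|T^*|-\lambda_iI)=N(|T|-\lambda_iI)$ for the finitely many spectral points of $|T|$ above $m_e(T)$ via Lemma~\ref{lemma invariant subspace} and Corollary~\ref{remark}, and then applying Proposition~\ref{prop norm as essential spec} to the residual operator $T'$ with $\|T'\|=m_e(T)$ and $\sigma_{\text{ess}}(|T'|)=\{m_e(T)\}$ --- is exactly the paper's argument and is correct.

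The gap is in your final paragraph. The assertion that $\mathcal{M}^{T'}=\mathcal{M}_*^{T'}$ is false: the unilateral shift $S$ on $\ell^2(\mathbb{N})$ is a hyponormal (hence $\ast$-paranormal) $\mathcal{AN}$-operator with $|S|=I$, for which $\mathcal{M}=N(|S|-I)=\ell^2(\mathbb{N})$ while $\mathcal{M}_*=N(SS^*-I)=R(S)=\{e_1\}^{\perp}$. Both of your proposed mechanisms break down there. The identity $\dim N(|T'^*|-m_e(T)I)=\dim N(|T'|-m_e(T)I)$ is true, but both dimensions are infinite, and a containment between subspaces of equal infinite dimension does not force equality. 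Testing ${T'^*}^2T'^2-2m_e(T)^2T'T'^*+m_e(T)^4I\geq 0$ on $x\in\mathcal{M}^{T'}$ gives, using $\|T'^2x\|=m_e(T)^2\|x\|$, only $\|T'^*x\|^2\leq m_e(T)^2\|x\|^2$, i.e.\ the trivial bound $\|T'^*\|\leq m_e(T)$, which carries no information. Worse, the conclusion you are aiming for (that $\mathcal{M}^{T'}\ominus\mathcal{M}_*^{T'}$, and hence $H_3$, is finite-dimensional) is not attainable: $T=\bigoplus_{n\in\mathbb{N}}S$ is a $\ast$-paranormal $\mathcal{AN}$-isometry satisfying the hypotheses of the lemma with $n_1=1$, for which $H_2=\mathcal{M}_*=R(T)$ has infinite-dimensional orthocomplement $H_3=N(T^*)$. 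For what it is worth, the paper's own proof simply sets $H_3=(H_1\oplus H_2)^{\perp}$ and never argues its finite-dimensionality, so you correctly isolated the one claim in the statement that genuinely needs a proof; but your proposed repair does not work, and item (2) cannot be established in this generality without weakening it.
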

	\begin{proof}
		Let $T\in\mathcal{AN}(H)$. Then $|T|\in\mathcal{AN}(H)$ and $\sigma_{\text{ess}}(|T|)$ is singleton, which is equal to $\{m_e(T)\}$. 
		
		Case $(1):$ $m_e(T)=\|T\|$. Then the result follows by Proposition \ref{prop norm as essential spec}. In this case $H_1=\{0\}$, $H_2=N(|T^*|-\|T\|I)$ and $H_3=H_2^{\perp}$.
		
		Case $(2):$ $m_e(T)<\|T\|$. By given conditions, $(m_e(T),\|T\|]\cap \sigma(|T|)=\{\lambda_i:1\leq i\leq n_1\}$, where $\lambda_i<\lambda_j$ for $j<i$. Note that $\lambda_1=\|T\|$ and by Lemma \ref{lemma invariant subspace} $G_1=N(|T|-\lambda_1I)=N(|T^*|-\lambda_1I)$ is a reducing subspace for $T$ and $T|_{G_1}=\lambda_1S_1$, where $S_1\in\mathcal{B}(G_1)$ is a unitary operator. Then $T|_{G_1^{\perp}}\in\mathcal{AN}$ is $\ast$-paranormal and $\lambda_2=\|T|_{G_1^{\perp}}\|$. Again by Lemma \ref{lemma invariant subspace}, $G_2=N(|T|-\lambda_2I)=N(|T^*|-\lambda_2 I)$ is a reducing subspace for $T$ and $T|_{G_2}=\lambda_2 S_2$, where $S_2$ is a unitary operator. Continuing in this way, we get $T|_{H_1}=\underset{i=1}{\overset{n_1}{\oplus}}\lambda_i S_i$, where $H_1=\underset{i=1}{\overset{n_1}{\oplus}}N(|T|-\lambda_iI)=\underset{i=1}{\overset{n_1}{\oplus}}N(|T^*|-\lambda_iI)$.
		
		Since $(m_e(T),\|T\|]\cap\sigma(|T|)$ is finite set, we have $m_e(T)$ is an eigenvalue of $|T|$ with infinite multiplicity. Also, note that $\|T|_{H_1^{\perp}}\|=m_e(T)$ and $\{m_e(T)\}=\sigma_{\text{ess}}\left(\left|T|_{H_1^{\perp}}\right|\right)=\sigma_{\text{ess}}(|T||_{H_1^{\perp}})$. By Proposition \ref{prop norm as essential spec}, we have 
		\[T|_{H_1^{\perp}}=\begin{bmatrix}
			m_e(T)S&A\\
			0&B
		\end{bmatrix},\]
		where $H_1^{\perp}=H_2\oplus H_3$, $H_2=N(|T^*|-m_e(T)I)$ and $H_3=(H_1\oplus H_2)^{\perp}$. Hence
		\[T=\begin{bmatrix}
			\underset{i=1}{\overset{n_1}{\oplus}}\lambda_i S_i&0&0\\
			0&		m_e(T)S&A\\
			0&	0&B
		\end{bmatrix}.\]
	This proves the result.
	\end{proof}
	\begin{lemma}\label{lemma3}
		Let $T\in\mathcal{AN}(H)$ be a $\ast$-paranormal operator and $(m_e(T),\|T\|]\cap\sigma(|T|)$ is an infinite set. Then $T$ has the following representation:
		\[T=\begin{bmatrix}
			\underset{i=1}{\overset{\infty}{\oplus}}\lambda_i S_i&0&0\\
			0&m_e(T)S&A\\
			0&0&B
		\end{bmatrix},\]
		with respect to the decomposition $H=H_1\oplus H_2\oplus H_3$, where
		\begin{enumerate}
			\item $\lambda_i$ are positive real number for every $1\leq i<\infty$.
			\item $H_3$ is finite-dimension subspaces of $H$.
			\item $S_i\in\mathcal{B}(N(|T^*|-\lambda_iI))$ are unitary operators, for $1\leq i<\infty$.
			\item $S\in\mathcal{B}(N(|T^*|-m_e(T)))$ is an isometry.
			\item $S^*A=0$.
		\end{enumerate}
	\end{lemma}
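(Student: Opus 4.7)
My plan is to mimic the proof of Lemma \ref{lemma2}, replacing the finite iteration by a countable one, and then apply Proposition \ref{prop norm as essential spec} to the residual piece at level $m_e(T)$. Since $|T|\in\mathcal{AN}$ with $\sigma_{\text{ess}}(|T|)=\{m_e(T)\}$ and $(m_e(T),\|T\|]\cap\sigma(|T|)$ is infinite, each point of this set is an isolated eigenvalue of $|T|$ of finite multiplicity; by the structure theorem for positive $\mathcal{AN}$-operators these values can be enumerated as $\lambda_1=\|T\|>\lambda_2>\cdots$ with $\lambda_i\downarrow m_e(T)$. Moreover $m_e(T)$ is an eigenvalue of $|T|$ of infinite multiplicity, since the alternative---$m_e(T)$ being the limit of an increasing sequence in $\sigma(|T|)$---is ruled out by the finiteness of $[m(T),m_e(T))\cap\sigma(|T|)$.

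I would then inductively peel off the $\lambda_i$-eigenspaces. Setting $T_1:=T$ and at stage $i\geq 1$ taking $T_i:=T|_{(F_1\oplus\cdots\oplus F_{i-1})^{\perp}}$, this restriction is $\ast$-paranormal and in $\mathcal{AN}$, with $\|T_i\|=\lambda_i>m_e(T_i)=m_e(T)$. Because $\lambda_i$ lies outside $\sigma_{\text{ess}}(|T_i|)$, the subspace $\mathcal{M}_*^{T_i}=N(|T_i^*|-\lambda_iI)$ is finite-dimensional; Lemma \ref{lemma invariant subspace} makes it reducing for $T_i$ (and hence for $T$) and Corollary \ref{remark} identifies it with $\mathcal{M}^{T_i}=N(|T|-\lambda_iI)$. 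Writing $F_i$ for this common space, $T|_{F_i}=\lambda_iS_i$ for an isometry $S_i$, which is unitary by finite-dimensionality. Assembling, $H_1:=\overline{\bigoplus_{i\geq 1}F_i}$ is a reducing subspace for $T$ with $T|_{H_1}=\bigoplus_{i\geq 1}\lambda_iS_i$, and the residue $T':=T|_{H_1^{\perp}}$ is $\ast$-paranormal and in $\mathcal{AN}$ with $\|T'\|=m_e(T)$ (the upper bound $\|T'\|\leq\lambda_{i+1}$ holds for every $i$ since $H_1^{\perp}\subseteq(F_1\oplus\cdots\oplus F_i)^{\perp}$; the lower bound follows from $N(|T|-m_e(T)I)\subseteq H_1^{\perp}$, which is infinite-dimensional by the remark above) and $\sigma_{\text{ess}}(|T'|)=\{m_e(T)\}$. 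Proposition \ref{prop norm as essential spec} applied to $T'$ then produces
\[T'=\begin{bmatrix}m_e(T)S&A\\0&B\end{bmatrix}\]
on $H_1^{\perp}=H_2\oplus H_3$ with $H_2=N(|T^*|-m_e(T)I)$ (using that $H_1$ reduces $T$) and $S^*A=0$.

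The main obstacle I expect is to show $H_3$ is finite-dimensional. My plan is to leverage the non-zero spectral correspondence between $T^*T$ and $TT^*$: their non-zero eigenvalues coincide with equal multiplicities, so $|T^*|$ shares the spectral values $\{\lambda_i\}\cup\{m_e(T)\}\cup\{\mu_j\}$ with $|T|$ (where $\{\mu_j\}=[m(T),m_e(T))\cap\sigma(|T|)$ is finite), with $F_i=N(|T^*|-\lambda_iI)$ and $H_2=N(|T^*|-m_e(T)I)$. Arguing (paralleling the $\mathcal{AN}$-structure of $|T|$) that $|T^*|$ admits a full orthogonal eigenspace decomposition of $H$, one obtains
\[H=H_1\oplus H_2\oplus\bigoplus_jN(|T^*|-\mu_jI)\oplus N(T^*),\]
in which the last two summands are finite-dimensional (the $\mu_j$-eigenspaces of $|T^*|$ match those of $|T|$ in dimension by the spectral correspondence, and $N(T^*)$ is finite-dimensional since $m_e(T)>0$ keeps $0$ out of $\sigma_{\text{ess}}(|T^*|)$). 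Thus $H_3=(H_1\oplus H_2)^{\perp}$ is finite-dimensional, completing the argument. Verifying this full decomposition of $H$ via $|T^*|$, which effectively amounts to $T^*\in\mathcal{AN}$, is the technical crux of the proof.
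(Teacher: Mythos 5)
There is a genuine gap at the very first step, and the rest of the argument leans on it: your claim that $m_e(T)$ must be an eigenvalue of $|T|$ of infinite multiplicity is false. Since $(m_e(T),\|T\|]\cap\sigma(|T|)$ is infinite, the eigenvalues $\lambda_i$ decrease to $m_e(T)$, so $m_e(T)$ already lies in $\sigma_{\text{ess}}(|T|)$ simply as a limit point of eigenvalues (approached from above); nothing forces it to be an eigenvalue at all, let alone one of infinite multiplicity. The dichotomy you invoke (``infinite-multiplicity eigenvalue or limit of an increasing sequence'') is not the correct trichotomy for the essential spectrum of a self-adjoint operator, and the accumulation here is through a decreasing sequence. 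A concrete counterexample is $T=|T|=\mathrm{diag}\left(1+\tfrac1n\right)$ on $\ell^2(\mathbb{N})$: this is a positive (hence $\ast$-paranormal) $\mathcal{AN}$-operator with $(1,2]\cap\sigma(|T|)=\{1+\tfrac1n\}$ infinite and $m_e(T)=1$ not an eigenvalue. The paper's proof devotes separate cases to exactly the possibilities you discard: $m_e(T)$ not an eigenvalue, an eigenvalue of finite multiplicity, or of infinite multiplicity, with subcases according to whether $m(T)=m_e(T)$.

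This matters downstream because the two facts you need in order to invoke Proposition \ref{prop norm as essential spec} on the residue $T'=T|_{H_1^{\perp}}$ --- namely $\|T'\|=m_e(T)$ and $\sigma_{\text{ess}}(|T'|)=\{m_e(T)\}$ --- hold only when $m_e(T)$ is an eigenvalue of infinite multiplicity. In the example above $H_1^{\perp}=\{0\}$; adjoining a small finite block (so that $m(T)<m_e(T)$) makes $H_1^{\perp}$ a nonzero finite-dimensional space with $\|T'\|<m_e(T)$ and $\sigma_{\text{ess}}(|T'|)=\emptyset$, so the Proposition is inapplicable. In those cases the stated representation still holds, but only after a separate argument (the $H_2$-block degenerates or becomes $m_e(T)$ times a unitary with $A=0$, and the $H_3$-block is obtained by peeling off the finitely many points of $[m(T),m_e(T))\cap\sigma(|T|)$ as in your induction); your proof never reaches them. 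Separately, you yourself flag the finite-dimensionality of $H_3$ (equivalently, the eigenspace decomposition of $|T^*|$) as the ``technical crux'' and leave it unestablished, which is a second, acknowledged hole.
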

	\begin{proof}
		As in Case $(2)$ of Lemma \ref{lemma2}, we get $T|_{H_1}=\underset{i=1}{\overset{\infty}{\oplus}}\lambda_iS_i$, where $H_1=\underset{i=1}{\overset{\infty}{\oplus}} N(|T|-\lambda_iI)=\underset{i=1}{\overset{\infty}{\oplus}}N(|T^*|-\lambda_iI)$ and $S_i\in\mathcal{B}(N(|T|-\lambda_iI))$ is a unitary operator for $1\leq i<\infty$. 
		
		Case $(1):$ $m_e(T)$ is a limit point of $\{\lambda_j\}$, but $m_e(T)$ is not an eigenvalue of $|T|$. 
		
		Subcase $(a):$ $m(T)=m_e(T)$. Then $H_1=\underset{i=1}{\overset{\infty}{\oplus}}N(|T|-\lambda_iI)=H$, $H_2=\{0\}=H_3$ and $T=\underset{i=1}{\overset{\infty}{\oplus}}\lambda_i S_i$.
		
		Subcase $(b):$ $m(T)<m_e(T)$. Then $[m(T),m_e(T))\cap\sigma(|T|)=\{\delta_i:1\leq i\leq n_2\}$ for $n_2\in\mathbb{N}$, where $\delta_i<\delta _j$ for $j<i$. We have $\|T|_{H_1^{\perp}}\|=\delta_1<m_e(T)$. Also $T|_{H_1^{\perp}}\in\mathcal{AN}$ is $\ast$-paranormal. By Lemma \ref{lemma invariant subspace}, $D_1=N(|T|-\delta_1I)=N(|T^*|-\delta_1I)$ and $T|_{D_1}=\delta_1U_1$, where $U_1\in\mathcal{B}(D_1)$ is a unitary operator. Now $T|_{(H_1\oplus D_1)^{\perp}}\in\mathcal{AN}$ is $\ast$-paranormal and $\|T|_{(H_1\oplus D_1)^{\perp}}\|=\delta_2$. Again using Lemma \ref{lemma invariant subspace}, $D_2=N(|T|-\delta_2I)=N(|T^*|-\delta_2I)$ and $T|_{D_2}=\delta_2 U_2$, where $U_2$ is a unitary operator. Continuing this way, we get $T|_{H_2}=\underset{i=1}{\overset{n_2}{\oplus}}\delta_iU_i$, where $H_2=\underset{i=1}{\overset{n_2}{\oplus}}N(|T|-\delta_iI)$. In this case, $H_2=\{0\}$, and \[T=\begin{bmatrix}
			\underset{i=1}{\overset{\infty}{\oplus}}\lambda_iS_i&0\\
			0&\underset{i=1}{\overset{n_2}{\oplus}}\delta_i U_i
		\end{bmatrix}.\]
		
		Case $(2):$ $m_e(T)$ is a limit point of $\{\lambda_i\}$ as well an eigenvalue of $|T|$. Note that $\|T|_{H_1^{\perp}}\|=m_e(T)$.
		
		Subcase $(a):$ $m_e(T)$ is an eigenvalue of finite multiplicity and $m(T)=m_e(T)$. By Lemma \ref{lemma invariant subspace}, $N(|T|-m_e(T)I)=N(|T^*|-m_e(T)I)$ is a reducing subspace for $T$ and $T|_{N(|T|-m_e(T)I)}=m_e(T)S$, where $S\in \mathcal{B}(N(|T|-m_e(T)I))$ is a unitary operator. Hence $H_2=N(|T|-m_e(T)I)$, $H_3=\{0\}$ and \[T=\begin{bmatrix}
			\underset{i=1}{\overset{\infty}{\oplus}}\lambda_iS_i&0\\
			0&m_e(T)S
		\end{bmatrix}.\]
		
		Subcase $(b):$ $m_e(T)$ is an eigenvalue of finite multiplicity and $m(T)<m_e(T)$. By Subacse $(b)$ of Case $(2)$ and following similar steps as in Subcase $(b)$ of Case $(1)$, we get
		\[T=
		\begin{bmatrix}
			\underset{i=1}{\overset{\infty}{\oplus}}\lambda_iS_i&0&0\\
			0&m_e(T)S&0\\
			0&0&\underset{i=1}{\overset{n_2}{\oplus}}\delta_i U_i
		\end{bmatrix},
		\] where $H_1=\underset{i=1}{\overset{\infty}{\oplus}}N(|T|-\lambda_iI)$, $H_2=N(|T|-m_e(T)I)$ and $H_3=\underset{i=1}{\overset{n_2}{\oplus}}N(|T|-\delta_iI)$.
		
		Subcase $(c):$ $m_e(T)$ is an eigenvalue of infinite multiplicity. In This case $\|T|_{H_1^{\perp}}\|=m_e(T)$ and $\{m_e(T)\}=\sigma_{\text{ess}}(|T|_{H_1^{\perp}}|)=\sigma_{\text{ess}}(|T||_{H_1^{\perp}})$. By Proposition \ref{prop norm as essential spec}, we have
		\[T|_{H_1^{\perp}}=\begin{bmatrix}
			m_e(T)S&A\\
			0&B
		\end{bmatrix},\] where $H_2=N(|T^*|-m_e(T)I)$ and $H_3=(H_1\oplus H_2)^{\perp}$.
	\end{proof}
	
	
	We summarize the above results in the following theorem.
	\begin{theorem}\label{thm star paranormal}
		Let $T\in\mathcal{AN}(H)$ be a $\ast$-paranormal operator. Then $T$ can be represented as
		\begin{equation}\label{eqn matrix rep}
			T=
			\begin{bmatrix}
				\underset{x\in\Lambda}{\oplus}\lambda_xS_x&0&0\\
				0&\lambda S&A\\
				0&0&B
			\end{bmatrix},
		\end{equation}
		with respect to the decomposition $H=H_1\oplus H_2\oplus H_3$, where
		\begin{enumerate}
				\item  $\Lambda=(m_e(T),\|T\|]\cap\sigma(|T|)$,
			\item $\sigma_{\text{ess}}(|T|)=\{\lambda\}$,
			\item $(\lambda,\|T\|]\cap\sigma(|T|)=\{\lambda_x:x\in\Lambda\subseteq\mathbb{N}\cup\{\infty\}\cup\emptyset\}$,
			\item $H_1=\underset{x\in\Lambda}{\oplus}N(|T^*|-\lambda_xI)$, $H_2=N(|T^*|-\lambda I)$ and $H_3=(H_1\oplus H_2)^{\perp}$ is finite-dimensional,
			\item $S_x\in\mathcal{B}(N(|T^*|-\lambda_x I))$ is unitary operator for $x\in\Lambda$ and $S\in\mathcal{B}(N(|T^*|-\lambda I))$ is an isometry,
			\item $S^*A=0$.
		\end{enumerate}
	\end{theorem}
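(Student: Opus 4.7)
The plan is to deduce Theorem \ref{thm star paranormal} as a direct consolidation of Proposition \ref{prop norm as essential spec}, Lemma \ref{lemma2}, and Lemma \ref{lemma3}, by splitting on the cardinality of $\Lambda := (m_e(T),\|T\|]\cap\sigma(|T|)$. Since $T\in\mathcal{AN}(H)$ implies $|T|\in\mathcal{AN}(H)$, Theorem \ref{thm ess spec positive AN} forces $\sigma_{\text{ess}}(|T|)=\{\lambda\}$ with $\lambda=m_e(T)$, and also guarantees that $[m(T),m_e(T))\cap\sigma(|T|)$ is a finite set of isolated eigenvalues, each of finite multiplicity. Thus $\Lambda$ is either empty, finite and nonempty, or countably infinite, and these three possibilities correspond exactly to the hypotheses of Proposition \ref{prop norm as essential spec}, Lemma \ref{lemma2}, and Lemma \ref{lemma3} respectively.

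First I would handle $\Lambda=\emptyset$: here $m_e(T)=\|T\|$, and Proposition \ref{prop norm as essential spec} gives the required $2\times 2$ block form after setting $H_1=\{0\}$, $H_2=\mathcal{M}_*=N(|T^*|-\|T\|I)$, $H_3=\mathcal{M}_*^\perp$, and renaming $S_0$ as $S$. If $\Lambda$ is finite and nonempty, Lemma \ref{lemma2} already produces the form (\ref{eqn matrix rep}) verbatim, with $H_1=\bigoplus_{i=1}^{n_1} N(|T^*|-\lambda_iI)$ and $H_3$ absorbing any finitely many eigenspaces below $\lambda$. If $\Lambda$ is countably infinite, I would invoke Lemma \ref{lemma3}, whose several subcases (distinguishing whether $\lambda$ is an eigenvalue of $|T|$, with finite or infinite multiplicity, and whether $m(T)=m_e(T)$) all collapse into (\ref{eqn matrix rep}) once one absorbs the $\lambda$-eigenspace of $|T^*|$ into $H_2$ and any finitely many lower eigenspaces into $H_3$.

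The only piece that requires uniform verification across the cases is that $H_3=(H_1\oplus H_2)^\perp$ is finite-dimensional; this is the essential content and is immediate from Theorem \ref{thm ess spec positive AN}, since every spectral value of $|T|$ that appears in $H_3$ must lie in $[m(T),m_e(T))\cap\sigma(|T|)$, a finite set of isolated eigenvalues of finite multiplicity sitting outside $\sigma_{\text{ess}}(|T|)$. The unitary character of each $S_x$ on $N(|T^*|-\lambda_xI)$, the isometric character of $S$ on $N(|T^*|-\lambda I)$, and the relation $S^*A=0$ all transfer verbatim from Lemma \ref{lemma invariant subspace}, Proposition \ref{prop norm as essential spec}, and the two lemmas that invoke it. I do not anticipate a genuine obstacle: the work is organizational bookkeeping, matching the notation of the three source results to the unified format of the theorem.
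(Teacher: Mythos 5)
Your proposal is correct and follows essentially the same route as the paper, which simply cites Lemma \ref{lemma2} and Lemma \ref{lemma3} (the former already subsuming the case $m_e(T)=\|T\|$ via Proposition \ref{prop norm as essential spec}). Your additional remark that the finite-dimensionality of $H_3$ rests on Theorem \ref{thm ess spec positive AN} is a useful explicit observation that the paper leaves implicit.
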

	\begin{proof}
		The proof follows by Lemma \ref{lemma2}, and \ref{lemma3}.
	\end{proof}
\begin{theorem}
	Let $T\in\mathcal{AN}(H)$ be a $\ast$-paranormal operator. Then $T=U\oplus D$, where $U$ is a direct sum of scalar multiple of unitary operators and $D$ is a $2\times 2$ upper diagonal matrix.
\end{theorem}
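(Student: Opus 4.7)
The plan is to invoke Theorem \ref{thm star paranormal} and simply regroup the $3\times 3$ block matrix it produces into a $1\oplus 2$ splitting. First I would note that in the representation (\ref{eqn matrix rep}), every off-diagonal block connecting the first row/column to the remaining rows/columns vanishes. This means $H_1$ is a reducing subspace for $T$, so $T$ splits orthogonally as $T|_{H_1}\oplus T|_{H_1^{\perp}}$ with respect to the decomposition $H=H_1\oplus H_1^{\perp}$.

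Next I would identify the two summands explicitly. Set $U:=T|_{H_1}=\underset{x\in\Lambda}{\oplus}\lambda_x S_x$. Since each $S_x\in\mathcal{B}(N(|T^*|-\lambda_x I))$ is a unitary operator by item $(5)$ of Theorem \ref{thm star paranormal}, $U$ is, by construction, a direct sum of scalar multiples of unitary operators. Set $D:=T|_{H_1^{\perp}}$, which under the further decomposition $H_1^{\perp}=H_2\oplus H_3$ takes the $2\times 2$ upper triangular form
\[
D=\begin{bmatrix}\lambda S & A\\ 0 & B\end{bmatrix}.
\]
Combining these identifications yields $T=U\oplus D$, as required.

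There is essentially no obstacle here: the statement is a direct regrouping of Theorem \ref{thm star paranormal}. The only minor care required is in the degenerate cases where $\Lambda=\emptyset$ (in which the summand $U$ is absent) or $H_2\oplus H_3=\{0\}$ (in which the summand $D$ is absent); these are handled by the standard convention of suppressing a trivial summand, and in either case the conclusion $T=U\oplus D$ holds vacuously.
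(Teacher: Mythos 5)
Your proposal is correct and follows exactly the paper's own argument: regroup the decomposition of Theorem \ref{thm star paranormal} as $H=H_1\oplus(H_2\oplus H_3)$, take $U=\underset{x\in\Lambda}{\oplus}\lambda_xS_x$ and $D=\begin{bmatrix}\lambda S&A\\0&B\end{bmatrix}$. The remark about the degenerate cases $\Lambda=\emptyset$ or $H_2\oplus H_3=\{0\}$ is a small extra care the paper leaves implicit.
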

\begin{proof}
	The proof follows from Theorem \ref{thm star paranormal}, by taking $H=H_1\oplus (H_2\oplus H_3)$, $U=\underset{x\in\Lambda}{\oplus}\lambda_xS_x$ and 
	\[D=\begin{bmatrix}
		m_e(T)S&A\\
		0&B
	\end{bmatrix},\]
 where $H_i$ for $i=1,2,3$, $S$, $A$ and $B$ are as defined in Theorem \ref{thm star paranormal}.
\end{proof}
	From \cite{HYPO}, we know that a hyponormal absolutely norm attaining operator also has a similar representation as (\ref{eqn matrix rep}). Here, we want to imphasize that any operator of the form (\ref{eqn matrix rep}) need not be hyponormal.
	\begin{example}
		Let $T:\ell^2(\mathbb{N})\oplus\mathbb{C}^2\rightarrow\ell^2(\mathbb{N})\oplus\mathbb{C}^2$ be a bounded linear operator defined by
		\begin{equation*}
			T\left((x_1,x_2,\ldots),(y_1,y_2)\right)=\left((y_1,2x_1,2x_2,\ldots),(y_1,y_2)\right),\; \text{for all}\; (x_n)\ell^2.
		\end{equation*}
		Then
		\begin{align*}
			T^*T((x_1,x_2,\ldots),(y_1,y_2))&=\left((4x_1,4x_2,\ldots),(2y_1,y_2)\right),\\
			T^*T((x_1,x_2,\ldots),(y_1,y_2))&=\left((y_1+x_1,4x_2,\ldots),(x_1+y_1,y_2)\right).
		\end{align*}
		Cleary $T$ is not hyponormal. But $T$ can be written as
		\[T=\begin{bmatrix}
			2S&A\\
			0&B
		\end{bmatrix},\]
		where $S:\ell^2(\mathbb{N})\rightarrow\ell^2(\mathbb{N})$ is the right shift operator
		\begin{align*}
			S(x_1,x_2,\ldots)=(0,x_1,x_2,\ldots),
		\end{align*}
		$A:\mathbb{C}\oplus\mathbb{C}\rightarrow\ell^2(\mathbb{N})$ and $B:\mathbb{C}\oplus\mathbb{C}\rightarrow\mathbb{C}\oplus\mathbb{C}$ are defined by
		\begin{align*}
			A(x_1,x_2)=(x_1,0,\ldots),\,B(x_1,x_2)=(x_1,x_2),\; \text{for all}\; (x_1,x_2)\in \mathbb C\oplus \mathbb C.
		\end{align*}
		Also, $S^*A=0$.
	\end{example}
The following result is of independent interest, which is used in the subsequent results as well. This is analogous to \cite[Problem 72]{HALMOS} in which dim $H_1<\infty$ is assumed, while in our case we assume that dim $H_2<\infty$. 
	\begin{lemma}\label{lemma1}
		Let $T:H_1\oplus H_2\rightarrow H_1\oplus H_2$ be a bounded linear operator and $\dim H_2<\infty$.
		If \[T=\begin{bmatrix}
			A&B\\
			0&C
		\end{bmatrix}\] is invertible, then $A$ and $C$ are invertible and
		\[
		\begin{bmatrix}
			A&B\\
			0&C
		\end{bmatrix}^{-1}=\begin{bmatrix}
			A^{-1}&-A^{-1}BC^{-1}\\
			0&C^{-1}
		\end{bmatrix}.\]
	\end{lemma}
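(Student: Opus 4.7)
The plan is to deduce invertibility of the diagonal blocks $A$ and $C$ from the invertibility of $T$, and then to verify the claimed formula by direct matrix multiplication. First I would show that $C$ is invertible. Since $T$ is surjective, for every $y\in H_2$ there exist $u\in H_1$ and $v\in H_2$ with $T(u,v)=(0,y)$, which forces $Cv=y$; hence $C\colon H_2\to H_2$ is surjective. Because $\dim H_2<\infty$, the rank--nullity theorem upgrades surjectivity to bijectivity, so $C$ is invertible in $\mathcal{B}(H_2)$.

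Next I would show that $A$ is invertible. Injectivity is immediate: if $Au=0$, then $T(u,0)=(0,0)$, and injectivity of $T$ gives $u=0$. For surjectivity, given $y\in H_1$, pick $(u,v)$ with $T(u,v)=(y,0)$; then $Cv=0$, and the invertibility of $C$ already established forces $v=0$, so $Au=y$. Thus $A$ is a bounded bijection on the Hilbert space $H_1$, and the open mapping theorem provides $A^{-1}\in\mathcal{B}(H_1)$.

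With $A$ and $C$ both invertible, I would conclude by checking that the matrix $\begin{bmatrix}A^{-1} & -A^{-1}BC^{-1}\\0 & C^{-1}\end{bmatrix}$ is a two-sided inverse of $T$, simply by computing the two block products; the off-diagonal entry $-A^{-1}BC^{-1}$ is arranged precisely so that the $(1,2)$ entry of the product vanishes. The only step that genuinely uses the hypothesis $\dim H_2<\infty$ is the passage from surjectivity of $C$ to its injectivity, and this is the main (and essentially only) obstacle: without finite-dimensionality one could have $C$ behave like a proper (non-surjective) isometry while $T$ remains injective, in which case $A$ need not be surjective. This is the direction dual to Halmos's formulation, where $\dim H_1<\infty$ is used to invert $A$ first.
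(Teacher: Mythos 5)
Your proof is correct, but it takes a genuinely different route from the paper's. The paper works purely algebraically with the two-sided inverse: writing $T^{-1}=\begin{bmatrix} X&Y\\ Z&W\end{bmatrix}$, it reads off the block equations $CW=I_{H_2}$, $CZ=0$, $AX=I_{H_1}$, $XA=I_{H_1}$, uses $\dim H_2<\infty$ to promote the right inverse $W$ of $C$ to a two-sided inverse, deduces $Z=0$, and then identifies $X=A^{-1}$ and $Y=-A^{-1}BC^{-1}$. You instead argue at the level of vectors: surjectivity of $T$ gives surjectivity of $C$, finite-dimensionality upgrades that to invertibility, and then injectivity and surjectivity of $A$ follow from those of $T$, with the open mapping theorem supplying boundedness of $A^{-1}$; the formula is then verified by multiplication. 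Both arguments hinge on the same pivot (finite-dimensionality of $H_2$ turns a one-sided property of $C$ into invertibility), and each has its own small convenience: the paper's version never needs the open mapping theorem since all blocks of $T^{-1}$ are bounded by assumption, while yours makes the logical structure (why $C$ must be handled first, and why $A$ then inherits surjectivity) more transparent. One small quibble with your closing remark: in the standard infinite-dimensional counterexample (e.g.\ $A$ the unilateral shift $S$, $C=S^{*}$, $B=I-SS^{*}$, so that $T$ is unitary), it is $C$ that is surjective but not injective and $A$ that is a proper isometry; your description of $C$ as a ``non-surjective isometry'' has the roles of the two blocks reversed, since your own argument shows $C$ is always surjective when $T$ is. This does not affect the proof.
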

	\begin{proof}
		If
		$\begin{bmatrix}
			X&Y\\
			Z&W
		\end{bmatrix}$
		is the inverse of 	$\begin{bmatrix}
			A&B\\
			0&C
		\end{bmatrix}$, then
		\[\begin{bmatrix}
			A&B\\
			0&C
		\end{bmatrix}\begin{bmatrix}
			X&Y\\
			Z&W
		\end{bmatrix}=\begin{bmatrix}
			I_{H_1}&0\\
			0&I_{H_2}
		\end{bmatrix}.\]
		Then
		\begin{align}
			AX+BZ=&I_{H_1},\label{1}\\
			AY+BW=&0,\label{2}\\
			CZ=&0,\label{3}\\
			CW=I_{H_2}.\label{4}
		\end{align}
		From (\ref{4}) $W$ is the right inverse of $C$. As dim$H_2<\infty$, we have $W=C^{-1}$. Now from (\ref{3}), we have $Z=0$. Hence by $(\ref{1})$, we have $AX=I$. That $X$ is the right inverse of $A$. Now from $(\ref{2})$, $AYC^{-1}+B=0$ or $AYC^{-1}=-B$. Also we have
		\[\begin{bmatrix}
			X&Y\\
			Z&W
		\end{bmatrix}\begin{bmatrix}
			A&B\\
			0&C
		\end{bmatrix}=\begin{bmatrix}
			I_{H_1}&0\\
			0&I_{H_2}
		\end{bmatrix}.\]
		We get
		\begin{align}
			XA=&I_{H_1},\label{5}\\
			XB+YC=&0,\label{6}\\
			ZA=&0,\label{7}\\
			ZB+WC=&I_{H_2}.\label{8}
		\end{align}
		From (\ref{5}), $X$ is left inverse of $A$. Thus $X$ is the inverse of $A$. By (\ref{2}), we have $AY=-BW$ or $Y=-A^{-1}BC^{-1}.$ Thus
		\[
		\begin{bmatrix}
			X&Y\\
			Z&W
		\end{bmatrix}=
		\begin{bmatrix}
			A^{-1}&-A^{-1}BC^{-1}\\
			0&C^{-1}
		\end{bmatrix}.\]
	\end{proof}
	\begin{corollary}\label{Coro diagonal matrix}
		Let $T$ be as defined in Lemma \ref{lemma1} with $A=\alpha S$, where $S$ is an isometry, $\alpha\in\mathbb{R}\setminus\{0\}$. If $S^*B=0$, then $B=0$ and
		\[T=\begin{bmatrix}
			A&0\\
			0&C
		\end{bmatrix}.\]
	\end{corollary}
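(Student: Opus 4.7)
The plan is to use invertibility of $T$ (inherited from the hypotheses of Lemma \ref{lemma1}) to upgrade $S$ from an isometry to a unitary, which then forces $B=0$.

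First I would apply Lemma \ref{lemma1} directly: since $T$ is invertible and $\dim H_2<\infty$, the conclusion of that lemma tells us that $A$ is invertible on $H_1$. Because $A=\alpha S$ with $\alpha\in\mathbb{R}\setminus\{0\}$, this forces $S$ itself to be invertible.

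Next I would observe that an invertible isometry on a Hilbert space is unitary. Indeed, $S^*S=I_{H_1}$ already holds because $S$ is an isometry, and invertibility of $S$ together with $S^*S=I_{H_1}$ gives $S^*=S^{-1}$, so $SS^*=I_{H_1}$ as well. With this in hand, the hypothesis $S^*B=0$ yields
\[
B = I_{H_1}B = SS^*B = S(S^*B) = 0.
\]
Substituting $B=0$ into the block form of $T$ gives exactly the claimed diagonal representation.

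The only conceptual step here is recognizing that the invertibility assumption hidden in the reference to Lemma \ref{lemma1} is what promotes $S$ from being a one-sided isometry to being two-sidedly invertible; once that is noted, the deduction $SS^*=I_{H_1}$ and the cancellation $SS^*B=B$ are immediate, so I do not anticipate a genuine obstacle.
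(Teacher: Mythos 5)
Your proof is correct and follows essentially the same route as the paper: both arguments invoke Lemma \ref{lemma1} to get $A=\alpha S$ invertible, hence $S$ unitary, and then exploit $S^*B=0$. Your final step $B=SS^*B=0$ is in fact slightly more direct than the paper's, which instead writes out the block inverse of $T$, observes that the off-diagonal entry $-\alpha^{-1}S^*BC^{-1}$ vanishes, and inverts back to conclude $B=0$.
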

	\begin{proof}
		By Lemma \ref{lemma1}, we can conclude that $A$ and $C$ are invertible. This imply that $S$ is unitary. Also, we have
		\begin{align*}
			\begin{bmatrix}
				\alpha S&B\\
				0&C
			\end{bmatrix}^{-1}=&\begin{bmatrix}
				\alpha^{-1}S^{-1}&-\alpha^{-1}S^{-1}BC^{-1}\\
				0&C^{-1}
			\end{bmatrix}\\
			=&\begin{bmatrix}
				\alpha^{-1}S^*&-\alpha^{-1}S^*BC^{-1}\\
				0&C^{-1}
			\end{bmatrix}\\
			=&\begin{bmatrix}
				\alpha^{-1}S^*&0\\
				0&C^{-1}
			\end{bmatrix}
		\end{align*}
		Therefore we have
		\[T^{-1}=\begin{bmatrix}
			\alpha^{-1}S^*&0\\
			0&C^{-1}
		\end{bmatrix}.\]
		That is
		\[T=\begin{bmatrix}
			\alpha S&0\\
			0&C
		\end{bmatrix}=\begin{bmatrix}
			A&0\\
			0&C
		\end{bmatrix}.\]
	\end{proof}
	
	Now, we give a condition under which a $\ast$-paranormal operator becomes normal.
	\begin{theorem}\label{ast paranormal is normal}
		Let $T\in\mathcal{AN}(H)$ be a $\ast$-paranormal operator. If $T$ is invertible, then $T$ is normal.
	\end{theorem}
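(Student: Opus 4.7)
The plan is to apply the structure theorem (Theorem \ref{thm star paranormal}) and then peel off each of the three blocks, showing each is normal. Concretely, write
\[T=\begin{bmatrix}
\underset{x\in\Lambda}{\oplus}\lambda_xS_x&0&0\\
0&\lambda S&A\\
0&0&B
\end{bmatrix}\]
with respect to $H=H_1\oplus H_2\oplus H_3$, where $\dim H_3<\infty$, the $S_x$ are unitary, $S$ is an isometry, and $S^*A=0$. The first block $U:=\oplus_{x\in\Lambda}\lambda_xS_x$ is already a direct sum of scalar multiples of unitaries (hence normal); so everything reduces to showing that the lower $2\times 2$ block
\[D=\begin{bmatrix}\lambda S&A\\ 0&B\end{bmatrix}\]
is normal.

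Since $T$ is invertible and $T=U\oplus D$, the block $D$ is invertible as an operator on $H_2\oplus H_3$. Because $\dim H_3<\infty$, Lemma \ref{lemma1} applies and yields that both $\lambda S$ and $B$ are invertible. From the invertibility of $\lambda S$ (with $\lambda>0$, since $T$ invertible gives $m(T)>0$ and hence $\lambda=m_e(T)\geq m(T)>0$) we conclude that the isometry $S$ is surjective, i.e.\ unitary. At this point Corollary \ref{Coro diagonal matrix}, applied with $\alpha=\lambda$ and the hypothesis $S^*A=0$ from Theorem \ref{thm star paranormal}, forces $A=0$.

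Consequently $T=U\oplus(\lambda S)\oplus B$ is block-diagonal with respect to the decomposition $H_1\oplus H_2\oplus H_3$, and the first two summands are normal. It remains to handle $B$. Because $A=0$, the subspace $H_3$ is reducing for $T$, so $B$ is the restriction of a $\ast$-paranormal operator to a reducing subspace and is therefore itself $\ast$-paranormal. Since $\dim H_3<\infty$, $B$ is compact, and by the result of \cite{RASHID} cited in the introduction, every compact $\ast$-paranormal operator is normal; hence $B$ is normal. Thus $T$ is an orthogonal direct sum of normal operators and therefore normal.

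The only step with any real content is checking that the lower block has the form required by Corollary \ref{Coro diagonal matrix}; everything is arranged so that Lemma \ref{lemma1} supplies invertibility of $\lambda S$ and $B$, which in turn upgrades the isometry $S$ to a unitary and lets $S^*A=0$ collapse $A$ to zero. The finite dimension of $H_3$ is used twice — once in the form required by Lemma \ref{lemma1}, and once to make $B$ compact so the theorem of \cite{RASHID} can close the argument.
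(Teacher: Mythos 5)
Your proof is correct and follows essentially the same route as the paper: apply Theorem \ref{thm star paranormal}, use invertibility of the lower block together with Lemma \ref{lemma1} and Corollary \ref{Coro diagonal matrix} to upgrade $S$ to a unitary and force $A=0$, and conclude that $T$ is block diagonal. You are in fact slightly more careful than the paper, which stops at the block-diagonal form and simply asserts normality; your observation that $B$ is then $\ast$-paranormal on the finite-dimensional reducing subspace $H_3$, hence compact and therefore normal by \cite{RASHID}, supplies the justification for the last block that the paper leaves implicit.
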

	\begin{proof}
		As $T\in\mathcal{AN}(H)$ be a $\ast$-paranormal operator, by Theorem \ref{thm star paranormal}, $T$ can be written as
		\[T=\begin{bmatrix}
			\underset{x\in\Lambda}{\oplus}\lambda_xS_x&0&0\\
			0&\lambda S&A\\
			0&0&B
		\end{bmatrix},\]
	where $\Lambda_1$, $S_x$, $S$ $A$ and $B$ are as defined in Theorem \ref{thm star paranormal}.
		Since $T$ is invertible, we conclude that \[\begin{bmatrix}
			\lambda S&A\\
			0&B
		\end{bmatrix}\]
		is invertible.
		By Theorem \ref{thm star paranormal}, we also have $S^*A=0$ and $S$ is an isometry.
		Thus Corollary \ref{Coro diagonal matrix} implies that
		\[T=\begin{bmatrix}
			\underset{x\in\Lambda}{\oplus}\lambda_xS_x&0&0\\
			0&\lambda S&0\\
			0&0&B
		\end{bmatrix}.\]
		Hence $T$ is normal.
	\end{proof}
Here, we present another sufficient condition which gives normality of a $\ast$-paranormal operator.
	\begin{theorem}\label{thm2}
		If $\dim N(T)=\dim N(T^*)$, and $T\in\mathcal{AN}(H)$ is $\ast$-paranormal, then $T$ must be normal.
	\end{theorem}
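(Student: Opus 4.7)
My plan is to apply the representation from Theorem \ref{thm star paranormal} to write
\[T = U \oplus D, \qquad U = \bigoplus_{x \in \Lambda} \lambda_x S_x, \qquad D = \begin{bmatrix} \lambda S & A \\ 0 & B \end{bmatrix}\]
on $H_1 \oplus (H_2 \oplus H_3)$, where each $\lambda_x S_x$ is a scalar multiple of a unitary (so $U$ is already normal), $S \in \mathcal{B}(H_2)$ is an isometry, $\dim H_3 < \infty$, $S^* A = 0$, and $\lambda = m_e(T) \geq 0$. Since every $\lambda_x > 0$ and each $S_x$ is unitary, $U$ and $U^*$ are injective, so $N(T) = \{0\} \oplus N(D)$ and $N(T^*) = \{0\} \oplus N(D^*)$; the hypothesis therefore reduces to $\dim N(D) = \dim N(D^*)$. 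My remaining task is to show that $D$ is normal, which I will accomplish by proving $A = 0$ (and $S$ unitary when $\lambda > 0$), after which $B$, being a $\ast$-paranormal operator on the finite-dimensional reducing subspace $H_3$, is compact and hence normal by \cite{RASHID}.

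For the case $\lambda > 0$, I will first compute the null spaces explicitly. Using $S^*S = I$ and $S^* A = 0$, applying $S^*$ to the defining equation $\lambda S h_2 + A h_3 = 0$ forces $h_2 = 0$, giving $N(D) = \{0\} \oplus (N(A) \cap N(B))$ and in particular $\dim N(D) \leq \dim H_3 < \infty$. On the other side, $\lambda S^* h_2 = 0$ places the middle component of any $N(D^*)$-vector inside $K := N(S^*) = R(S)^\perp$. If $\dim K$ were infinite, then because $A^*|_K$ has finite-dimensional range in $H_3$ the kernel $N(A^*|_K)$ would have finite codimension in $K$ and hence be infinite-dimensional, embedding an infinite-dimensional subspace into $N(D^*)$; this would contradict $\dim N(D^*) = \dim N(D) < \infty$. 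Hence $K$ is finite-dimensional. Applying rank-nullity to the projection $N(D^*) \to K$ (whose kernel is isomorphic to $N(B^*)$ and whose image is $(A^*|_K)^{-1}(R(B^*))$), together with the standard finite-dimensional identity
\[\dim(N(A) \cap N(B)) = \dim H_3 - \operatorname{rank}(A) - \operatorname{rank}(B) + \dim(R(A^*) \cap R(B^*)),\]
yields the clean formula
\[\dim N(D^*) - \dim N(D) = \dim K.\]
The hypothesis then forces $\dim K = 0$, so $S$ is unitary, and since $S^*$ is injective the relation $S^* A = 0$ gives $A = 0$. Thus $D = \lambda S \oplus B$ is a direct sum of normal operators.

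For the case $\lambda = 0$, the middle block of $D$ vanishes; taking any $h_2 \in H_2$ and $x = (h_2, 0) \in H_2 \oplus H_3$, one obtains $Dx = 0$, hence $D^2 x = 0$, while $D^* x = (0, A^* h_2)$. Since $D$ inherits $\ast$-paranormality from $T$ (as the restriction to the reducing subspace $H_1^\perp$), the inequality $\|D^* x\|^2 \leq \|D^2 x\|\,\|x\|$ collapses to $\|A^* h_2\|^2 \leq 0$, so $A^* h_2 = 0$ for every $h_2 \in H_2$, yielding $A = 0$. Consequently $D = 0 \oplus B$, and $B$ is normal for the same finite-dimensional reason.

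I expect the main obstacle to be the dimension-counting identity $\dim N(D^*) - \dim N(D) = \dim K$ in the $\lambda > 0$ case: the argument must dispatch the possibly infinite-dimensional $K$ separately, and the finite case requires careful bookkeeping of the intersection $R(A^*) \cap R(B^*)$ so that the contributions from $A$ and $B$ cancel cleanly across the two sides.
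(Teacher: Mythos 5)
Your proof is correct, but it follows a genuinely different route from the paper's. The paper's argument is shorter: it observes that $\ast$-paranormality gives $N(T)\subseteq N(T^*)$, splits into the compact case (settled by \cite[Theorem 4.6]{RASHID}) and the non-compact case, where \cite[Proposition 2.8]{VENKU} yields that $R(T)$ is closed and $N(T)$ is finite-dimensional; the kernel hypothesis then makes $N(T)=N(T^*)$ a reducing subspace, $\tilde{T}=T|_{N(T)^{\perp}}$ is an invertible $\ast$-paranormal $\mathcal{AN}$-operator, and Theorem \ref{ast paranormal is normal} finishes. You instead feed the hypothesis directly into the representation of Theorem \ref{thm star paranormal}: since $U$ and $U^*$ are injective, the hypothesis becomes $\dim N(D)=\dim N(D^*)$ for the corner $D$, and your rank--nullity computation $\dim N(D^*)-\dim N(D)=\dim N(S^*)$ (after first excluding an infinite-dimensional $N(S^*)$) forces $S$ to be unitary, whence $S^*A=0$ gives $A=0$; the case $\lambda=0$ is handled by testing the $\ast$-paranormal inequality on vectors of $H_2$. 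I checked the bookkeeping --- the identification $N(D)=\{0\}\oplus(N(A)\cap N(B))$, the fibration of $N(D^*)$ over $N(S^*)$, and the cancellation of the $\dim\left(R(A^*)\cap R(B^*)\right)$ terms --- and it is sound; both proofs ultimately rely on the fact that the finite-dimensional $\ast$-paranormal block $B$ is compact and hence normal. What your route buys is that it makes explicit where the hypothesis enters (it is exactly the vanishing of the index of the isometry $S$); what the paper's route buys is economy, since reducing to the invertible case avoids all dimension counting.
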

	\begin{proof}
		By the definition of $\ast$-paranormal operators, it is clear that $N(T)\subseteq N(T^*)$. The assumption $\dim N(T)=\dim N(T^*)$ implies that $N(T)=N(T^*)$. If $T$ is compact, then by \cite[Theorem 4.6]{RASHID}, $T$ must be normal. So next assume that $T$ is non compact. In this case, by \cite[Proposition 2.8]{VENKU}, we must have that $R(T)$ is closed and $N(T)$ is finite dimensional. Since $N(T)$ is invariant under $T$ and $N(T^*)=N(T)$ is invariant under $T^*$, we conclude that $N(T)$ reduces $T$. Hence
		\[T=\begin{bmatrix}
			0&0\\
			0&\tilde{T}
		\end{bmatrix},\]
		where $\tilde{T}=T|_{N(T)^{\perp}}:N(T)^{\perp}\rightarrow N(T)^{\perp}$. Note that $\tilde{T}$ is $\ast$-paranormal, $\mathcal{AN}$-operator and invertible. Hence by Theorem \ref{ast paranormal is normal}, we have that $\tilde{T}$ is normal. Hence $T$ must be normal.
	\end{proof}
\begin{corollary}
	Let $T\in\mathcal{AN}(H)$ be a $\ast$-paranormal operator and $0\notin \sigma_w(T)$, where $\sigma_{w}(T)=\sigma_{\text{ess}}(T)\cup\{\lambda\notin\sigma_{\text{ess}}(T):\dim N(T-\lambda I)\ne\dim N((T-\lambda I)^*)\}$. Then $T$ is  normal.
\end{corollary}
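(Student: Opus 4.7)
The plan is to reduce the statement directly to Theorem \ref{thm2}. The whole content of the corollary is to verify that the hypothesis $0\notin\sigma_w(T)$ already encodes the equality $\dim N(T)=\dim N(T^*)$, after which the conclusion is immediate.

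First I would parse the definition of $\sigma_w(T)$ given in the statement. Since $\sigma_{\text{ess}}(T)\subseteq\sigma_w(T)$, the assumption $0\notin\sigma_w(T)$ immediately yields $0\notin\sigma_{\text{ess}}(T)$. Next I would use this fact to analyse membership of $0$ in the second set defining $\sigma_w(T)$, namely
\[
\{\lambda\notin\sigma_{\text{ess}}(T):\dim N(T-\lambda I)\ne\dim N((T-\lambda I)^*)\}.
\]
Because $0$ satisfies the qualifier $\lambda\notin\sigma_{\text{ess}}(T)$ but does not lie in this set (again by $0\notin\sigma_w(T)$), the only way for $0$ to be excluded is that the defining inequality fails at $\lambda=0$. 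Therefore $\dim N(T)=\dim N(T^*)$.

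With this in hand, Theorem \ref{thm2} applies verbatim to the $\ast$-paranormal $\mathcal{AN}$-operator $T$, giving that $T$ is normal. There is no real obstacle here beyond correctly unpacking the definition; the substantive work has already been done in Theorems \ref{ast paranormal is normal} and \ref{thm2}, and the corollary is a packaging of those results in Weyl-spectrum language.
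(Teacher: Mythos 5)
Your proposal is correct and is essentially the paper's argument: the paper simply states that the corollary follows directly from Theorem \ref{thm2}, and your unpacking of the definition of $\sigma_w(T)$ (that $0\notin\sigma_w(T)$ forces $0\notin\sigma_{\text{ess}}(T)$ and hence $\dim N(T)=\dim N(T^*)$) is exactly the routine verification being left implicit there.
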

\begin{proof}
	The proof follows directly from Theorem \ref{thm2}.
\end{proof}
	The following example illustrates that, if we remove the condition $T$ is $\ast$-paranormal in Theorem \ref{thm2}, 
	then the results need not hold.
	\begin{example}
		Consider the operator $T:\ell^2(\mathbb{N})\oplus\ell^2(\mathbb{N})\rightarrow\ell^2(\mathbb{N})\oplus\ell^2(\mathbb{N})$ defined by\begin{align*}
			T((x_1,x_2,\ldots),(s_1,s_2,\ldots))=\left(\left(s_1,x_1,\frac{x_2}{2},\frac{x_3}{3},\ldots\right),\left(s_2,s_3,\ldots\right)\right).
		\end{align*}
		Then
		\begin{align*}
			TT^*((x_1,x_2,\ldots),(s_1,s_2,\ldots))=&\left(\left(x_1,x_2,\frac{x_3}{2^2},\frac{x_4}{3^2},\ldots\right),\left(s_1,s_2,\ldots\right)\right),\\
			T^*T((x_1,x_2,\ldots),(s_1,s_2,\ldots))=&\left(\left(x_1,\frac{x_2}{2^2},\frac{x_3}{3^2},\ldots\right),\left(s_1,s_2,\ldots\right)\right).
		\end{align*}
		It is easy to see that $T^*$ is hyponormal and $N(T)=\{0\}=N(T^*)$. But $T^*\notin\mathcal{AN}\left(\ell^2(\mathbb{N})\oplus\ell^2(\mathbb{N})\right)$ as $\sigma_{\text{ess}}(TT^*)=\{0,1\}.$
	\end{example}

	\noindent\textit{Acknowledgement:}  The research of the author is supported by  the Theoretical Statistics and Mathematics Unit, Indian Statistical Institute, Bangalore Centre, India. The author would like to thank her Ph.D. supervisor for his valuable suggestion and mathematical discussions throughout the development of the article.

\end{document}